\pgfplotsset{compat=1.15}
\pgfplotsset{ticks = none}
\newcommand{\SG}{\mathsf{SG}}
\theoremstyle{plain}
\newtheorem{theorem}{Theorem}[section]
\newtheorem{proposition}[theorem]{Proposition}
\newtheorem{lemma}[theorem]{Lemma}
\newtheorem{definition}[theorem]{Definition}
\theoremstyle{definition}
\newtheorem{remark}[theorem]{Remark}
\newcommand{\old}[1]{}
\newcommand{\distto}{%
	\mathrel{\vbox{\offinterlineskip\ialign{%
				\hfil##\hfil\cr
				$\scriptscriptstyle\mathrm{d}$\cr
				\noalign{\kern-.05ex}
				$\to$\cr
}}}}
\newcommand{\findimto}{%
	\mathrel{\vbox{\offinterlineskip\ialign{%
				\hfil##\hfil\cr
				$\scriptscriptstyle\mathrm{f.d.}$\cr
				\noalign{\kern-.05ex}
				$\to$\cr
}}}}
\newcommand{\Probto}{%
	\mathrel{\vbox{\offinterlineskip\ialign{%
				\hfil##\hfil\cr
				$\scriptscriptstyle\Prob$\cr
				\noalign{\kern-.05ex}
				$\to$\cr
}}}}
\newcommand{\TVto}{%
	\mathrel{\vbox{\offinterlineskip\ialign{%
				\hfil##\hfil\cr
				$\scriptscriptstyle\mathrm{TV}$\cr
				\noalign{\kern-.05ex}
				$\to$\cr
}}}}
\newcommand{\N}{\mathbb{N}}
\newcommand{\Z}{\mathbb{Z}}
\newcommand{\Prob}{\mathds{P}}
\newcommand{\E}{\mathds{E}}
\newcommand{\eqdist}{%
	\mathrel{\vbox{\offinterlineskip\ialign{%
				\hfil##\hfil\cr
				$\scriptscriptstyle\mathrm{law}$\cr
				\noalign{\kern.2ex}
				$=$\cr
}}}}
\title{Random rotor walks  and i.i.d.~sandpiles on Sierpi\'nski graphs}
\date{\today}
\author{Robin Kaiser and Ecaterina Sava-Huss}
\begin{document}
\maketitle

\begin{abstract}
We prove that, on the infinite Sierpi\'nski gasket graph $\SG$, rotor walk with random initial configuration of rotors is recurrent. We also give a necessary condition for an i.i.d.~sandpile to stabilize. In particular, we prove that an i.i.d.~sandpile with expected number of chips per site greater or equal to three does not stabilize almost surely.
Furthermore, the proof also applies to divisible sandpiles and shows that divisible sandpile at critical density one does not stabilize almost surely on $\SG$.
\end{abstract}

\textit{2020 Mathematics Subject Classification.} 60J10, 60J45, 05C81.

\textit{Keywords}: rotor walk, rotor configuration, simple random walk, recurrence, transience, Abelian sandpile, divisible sandpile, stabilization, toppling procedure, infinite volume, Sierpi\'nski gasket, critical density.

\section{Introduction}\label{sec:intro}

{\it Rotor-router walk} (or shortly {\it rotor walk}) is a deterministic counterpart of a random walk on a graph $G$, defined as follows. For each vertex $x\in G$, we fix a cyclic ordering of its neighbors and we refer to it as the {\it rotor mechanism} at $x$. In addition, each vertex $x$ is equipped with an arrow (or rotor), which points initially to one of the  neighbours of $x$.  A particle (a walker) is located at the beginning at some vertex in $G$ and its location evolves in time  as following. At each time step, the rotor at the particle's location $x$ is incremented to point to the next neighbour in the cyclic ordering of $x$, and then the particle follows this new direction and moves to the vertex the rotor points to.
The rotor walk $(R_t)_{t\in\N}$ is obtained by repeated applications of this rule. 
If the initial direction of the arrows is random, then we call this process {\it random rotor walk} or {\it rotor walk with random initial configuration}. Such processes are interesting because there is remarkable agreement between their behaviour and the expected behaviour of random walks. On the other hand, between these two processes there are also striking differences, for instance in what concerns the recurrence and transience. A rotor walk is called recurrent if it visits each vertex infinitely many times, almost surely. Otherwise is called transient. The role of the underlying graph $G$ and its properties influences the behaviour of the rotor walks on them, and this aspect is observed in the current work where the underlying state space is a non-transitive self-similar graph, the {\it infinite Sierpi\'nski gasket graph}.  In particular, we prove that a random rotor walk on the infinite gasket is recurrent, a property shared by the simple random walk as well. Recurrence and transience of rotor walks with random initial configuration has been investigated on regular trees in \cite{holroyd-propp-trees}, on Galton-Watson trees and periodic trees in \cite{ecaterina-wilfried-sebastian,wilfried-ecaterina-directed-covers-2012}. For the recurrence of $p$-walks, a process that interpolates between random and rotor walks, see also \cite{p-walks}; a generalization of $p$-walks on higher dimensional lattices  has been considered in \cite{high-dim-p-walk}, where the author investigates the recurrence and transience behaviour of the process. On state spaces other than $\Z$, where the simple random walk is recurrent, the behaviour of random rotor walks is not completely understood.

{\it Stabilization of i.i.d.~sandpiles.} A sandpile on a graph $G$ is a function $\sigma:G\to\mathbb{N}_0$, where for $x\in G$, $\sigma(x)$ represents the number of grains of sand present at site $x$. The sandpile $\sigma$ is {\it stable} if for each $x$, $\sigma(x)$ is strictly less than the number of neighbours of $x$ in $G$. If at some vertex $x$ the sandpile $\sigma(x)$ has some number of grains greater or equal to the number of neighbours of $x$, then $\sigma(x)$ is {\it unstable} and {\it topples} by sending one grain of sand to each of the neighbours. The toppling at $x$ can create other unstable vertices, and we say that $\sigma$ \textit{stabilizes} if we can reach a stable sandpile configuration containing only stable vertices by toppling each vertex finitely many times, and $\sigma$ is then called stabilizable. If the heights $(\sigma(x))_{x\in G}$ are independent and identically distributed (i.i.d.) random variables, we refer to $\sigma$ as an i.i.d.~sandpile. Conditions for sandpiles at critical density\footnote{On $\mathbb{Z}^d$, the authors of \cite{fey-redig} consider the case of $2d$ particles at a site as stable, hence a shift in their results, and they prove that a necessary condition for an i.i.d.~sandpile to stabilize is $\E[\sigma(0)]\leq 2d-1$, but this condition is not sufficient for stabilization.} on $\Z^2$ were investigated in \cite[Theorem 1]{hough-jerison-levine-2019}, where it is shown that an i.i.d.~sandpile with  $\E[\sigma(0)]$ slightly less than $3$ cannot stabilize almost surely unless $\sigma(x)\leq 3$ with high probability.
We are not aware of state spaces other than $\Z^d$ where necessary and sufficient conditions for i.i.d.~sandpiles to stabilize are given.

{\it Rotor walks and sandpiles.} While both processes may be seen as approaches to distribute chips and move particles on a graph, another relation between them may not be obvious at first sight. Indeed, there is another natural relation between these two processes, in terms of group actions. In particular, for any finite graph one can define a rotor-router group with elements being the set of acyclic rotor configurations, where a configuration is called acyclic if the rotors do not form a directed cycle. On the same graph, over the set of stable sandpile configurations, one can define in a natural way a Markov chain, by adding one chip uniformly at random and stabilizing. The set of recurrent states for this Markov chain is a group, with group operation given by pointwise addition followed by stabilization. This group is called {\it the sandpile group} or the {\it critical group} and it acts transitively on the rotor-router group. These two groups are also isomorphic; we refer to \cite{chip-firing-survey} and the references there for a beautiful exposition and more details in this direction. 

\textbf{Our contribution.} We consider rotor walks and i.i.d.~sandpiles $\sigma$ on the doubly infinite Sierpi\'nski gasket graph $\SG$ with fixed vertex $o=(0,0)$ as in Figure \ref{fig:gasket}. Our motivation for looking at such state spaces comes from physics, because Abelian sandpiles on Sierpi\'nski gasket graphs have been considered by physicists for more than 20 years ago in \cite{waves-sandpile-daerden,crit-exp-daerden-1998,KutnjakUrbanc1996SandpileMO}, where several predictions and conjectures have been made. While the conjectures are still lacking mathematical proofs, there has been some recent progress on the limit shape for the Abelian sandpile on $\SG$ in \cite{chen-sandpile-2020}. For recent results on the identity element of the sandpile group and bounds on the speed of convergence to stationarity of the Abelian sandpile Markov chain on $\SG$ see \cite{robin-yuwen-ecaterina-gasket}; for the scaling limit of the identity element, see also \cite{scaling_identity}.

We denote by $(R_t)_{t\in\mathbb{N}}$ the rotor  walk  with random initial configuration of rotors on the doubly-infinite Sierpi\'nski gasket graph $\SG$. If at the beginning of the process, for each $x\in\SG$, the rotor at  $x$ is uniformly distributed on the neighbours of $x$, that is, it points to each of the neighbours with the same probability, then we call $(R_t)_{t\in\N}$ {\it uniform rotor walk}, shortly $\mathsf{URW}$. It is not known if the uniform rotor walk on $\Z^2$ is recurrent. We prove the following.

\begin{theorem}\label{thm:rec}
The uniform rotor walk $(R_t)_{t\in\mathbb{N}}$ on the doubly-infinite Sierpi\'nski gasket graph $\SG$, with anticlockwise ordering of the neighbours of each vertex, is recurrent. 
\end{theorem}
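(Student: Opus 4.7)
The plan is to exploit the self-similar structure of $\SG$ together with the uniform randomness of the initial rotor configuration, ultimately comparing URW escape probabilities with those of the simple random walk on $\SG$, which decay to zero.

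First, I would reduce the problem to showing that the URW returns to the origin $o$ infinitely often almost surely. This follows from a standard rotor-cycle argument: the rotor at $o$ advances one position per visit, so after $k \deg(o)$ visits to $o$ each neighbour of $o$ has been visited at least $k$ times, and iterating along finite paths, infinitely many visits to $o$ force infinitely many visits to every vertex of $\SG$.

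Next, I would recast recurrence at $o$ as requiring $\Prob[A_n] = 0$ for every $n$, where $A_n$ is the event that the walker eventually stays outside the level-$n$ subgasket $T_n \ni o$. I would aim for the stronger quantitative claim that $p_n \to 0$ as $n \to \infty$, where $p_n$ is the probability that the URW starting at $o$ reaches $\partial T_n$ before returning to $o$. Using the canonical decomposition of $T_n$ into three copies of $T_{n-1}$ glued at their outer midpoints, one can set up a recursion for $p_n$: the URW trajectory inside $T_n$ is an alternating sequence of excursions into the three subgaskets, with the subgasket corners acting as the coarse-grained state space. The crucial input is that the first time the URW enters a fresh subgasket, the rotors inside it are i.i.d.\ uniform and independent of the past, so the inductive hypothesis applies verbatim. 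Since the analogous recursion for SRW gives $p_n^{\mathrm{SRW}} \asymp (3/5)^n \to 0$ by the classical effective-resistance calculation on $\SG$ (the resistance across $T_n$ grows like $(5/3)^n$), a sufficiently tight URW recursion should yield the same decay.

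The main obstacle is handling excursions into subgaskets that have been visited before: after an initial excursion, the rotors there are no longer uniform, so the induction does not apply directly. I expect the resolution to use the abelian property of rotor walks to re-order excursions in a convenient way, together with a coupling of each rotor excursion with a freshly randomised one, controlling the discrepancy coming from deterministic rotor updates by counting how many full rotor cycles have been completed inside each subgasket. A related subtlety is to argue that the URW recursion is tight enough that its asymptotics are controlled by those of SRW; this is where the uniformity of the initial rotor configuration, rather than mere stationarity, enters crucially, by ensuring that the first-visit marginal at each vertex is a uniformly random neighbour, exactly as for SRW.
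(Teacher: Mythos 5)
Your strategy diverges from the paper's and, as written, contains a gap at its core that you yourself flag but do not close. The recursion for the escape probabilities $p_n$ only works cleanly for the \emph{first} excursion into a fresh subgasket, where the rotors are still i.i.d.\ uniform; for all subsequent excursions the configuration has been deterministically modified by the past, and the phrase ``I expect the resolution to use the abelian property \ldots together with a coupling'' is precisely the step that is missing. There is no known general mechanism for coupling a second rotor excursion with a freshly randomised one while controlling the discrepancy, and the difficulty of exactly this step is why recurrence of the uniform rotor walk on $\Z^2$ remains open. Moreover, the implicit claim that the URW escape probabilities must track the SRW ones (decaying like $(3/5)^n$) is not justified: rotor walks and random walks can have markedly different hitting statistics, and the paper never needs any such comparison. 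Your preliminary reduction (returns to $o$ infinitely often imply recurrence) is fine, but the heart of the argument is not there.

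The feature of $\SG$ that makes the theorem tractable is not self-similarity of escape probabilities but \emph{finite ramification}: the level-$n$ gasket $\SG_n$ meets its complement in only four cut vertices $\{x_n,y_n,z_n,t_n\}$. The paper's proof sets $S_n=\SG_n\setminus\{x_n,y_n,z_n,t_n\}$, so that $\partial_o S_n$ consists of exactly these four vertices, and observes that each such boundary vertex independently has, with probability $1/4$, its rotor in the position that returns the particle to $S_n$ through all of its $S_n$-neighbours before letting it escape. Hence $S_n$ has \emph{reflecting boundary} (in the sense of Angel--Holroyd) with probability $(1/4)^4$, uniformly in $n$, and since the relevant rotors for distinct $n$ are disjoint, the second Borel--Cantelli lemma gives infinitely many reflecting scales almost surely; the Angel--Holroyd lemmas then yield infinitely many returns to $o$ and recurrence from any starting vertex. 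This completely sidesteps excursion decompositions, re-randomisation, and any comparison with the simple random walk. If you want to salvage your approach, you would need to either prove the coupling you allude to or replace it with an argument that, like the paper's, only uses the uniform randomness of a \emph{bounded} number of rotors per scale.
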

Concerning Theorem \ref{thm:rec}, almost sure convergence of the uniform rotor walk is the best possible result one could hope for, since transient rotor configurations always exist. On the Sierpi\'nski gasket one can consider a rotor configuration which sends the particle always to the right. This configuration is a null set if rotors are chosen uniformly at random.


\begin{theorem}\label{thm:iid-sandpile}
Let $\sigma=(\sigma(x))_ {x\in \SG}$ be an i.i.d.~sandpile on $\SG$ with $\mathbb{E}[\sigma(o)]\geq 3$. If  $0<\text{Var}[\sigma(o)]<\infty$, then $\sigma$ does not stabilize almost surely.
\end{theorem}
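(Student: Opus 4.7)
The plan is to reduce the theorem to the analogous non-stabilization statement for divisible sandpiles, and then prove the latter via a mass-balance argument on the self-similar triangles of $\SG$. For the reduction, suppose the abelian sandpile $\sigma$ stabilizes with integer odometer $u\colon\SG\to\N_0$ satisfying $\sigma + \Delta u \leq \deg - 1 \leq 3$. Then $\tau := \sigma/3$ satisfies $\tau + \Delta(u/3) \leq 1$ with $u/3 \geq 0$ real-valued, so the divisible sandpile $\tau$ stabilizes. Since $\E[\tau] = \E[\sigma]/3 \geq 1$ and $0 < \Var(\tau) = \Var(\sigma)/9 < \infty$, it suffices to prove that any i.i.d.~divisible sandpile on $\SG$ with mean $\geq 1$ and positive finite variance fails to stabilize.

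Let $T_n \subset \SG$ denote a level-$n$ triangle with three outer corners $a_1^{(n)}, a_2^{(n)}, a_3^{(n)}$; its distinctive geometric feature is that $|T_n| \asymp 3^n$ but only six boundary edges cross $\partial T_n$, regardless of $n$. Assuming the divisible sandpile stabilizes with odometer $v\geq 0$ satisfying $\tau + \Delta v \leq 1$, summing this inequality over $T_n$ and applying discrete integration by parts yields
\[
\sum_{x \in T_n}\bigl(\tau(x) - 1\bigr) \;\leq\; 2\sum_{i=1}^{3}v\bigl(a_i^{(n)}\bigr) + O(1),
\]
where nonnegativity of $v$ drops the contribution from the six outside neighbours.

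In the supercritical case $\E[\tau(o)]>1$, the SLLN forces the left-hand side to be of order $|T_n|$, so at least one $v(a_i^{(n)})$ is of order $3^n$ for every large $n$. Applying this simultaneously to the many congruent copies of $T_n$ throughout $\SG$ and using $\mathrm{Aut}(\SG)$-invariance of the i.i.d.~law of $\tau$, an averaging argument yields $\E[v]=+\infty$ at infinitely many congruent vertices, which contradicts the a.s.~finiteness of $v$ via Fatou's lemma.

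The critical case $\E[\tau(o)]=1$ with $\Var(\tau(o))>0$ is where the main difficulty lies. Now $S_n := \sum_{x\in T_n}(\tau(x) - 1)$ is a mean-zero sum with variance of order $|T_n|$; by the CLT combined with Borel--Cantelli along exponentially-spaced scales $n_k$, $\limsup_k S_{n_k}/\sqrt{|T_{n_k}|} = +\infty$ almost surely, forcing $\max_i v(a_i^{(n_k)}) \gtrsim \sqrt{|T_{n_k}|}$. To localize this growth at a fixed vertex (e.g.~the origin), I would iterate the mass-balance identity using the canonical self-similar decomposition of $T_n$ into three level-$(n-1)$ sub-triangles joined at interior apex vertices. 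Mass balance inside each sub-triangle yields independent lower bounds on $v$ at the apices in terms of i.i.d.~scale-$(n-1)$ fluctuations. Iterating the recursion from scale $n$ down to scale $0$ and accumulating these lower bounds along a nested sequence of apices converging to $o$ yields $v(o) \gtrsim \sum_{k\geq 0}X_k$ with $X_k$ independent random variables whose partial sums diverge by Kolmogorov's three-series theorem, contradicting $v(o)<\infty$. The \textbf{main obstacle} is formalising this recursion so that the independent fluctuations at each scale translate into independent lower bounds at the shared apex vertices, and ensuring competing signs do not destroy the divergence; a careful choice of apex sequence (e.g.\ always following the sub-triangle carrying the largest absolute fluctuation) combined with sign-symmetrisation of the mass-balance bound should resolve this.
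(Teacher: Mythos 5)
Your central mechanism is the right one and is the same as the paper's: because $\SG$ is finitely ramified, a level-$n$ triangle has volume of order $3^n$ but only three corners through which excess mass can escape, so a CLT-sized fluctuation of the total mass forces the odometer at some corner to be at least of order $\sqrt{3^n}$ (or $3^n$ in the supercritical case). Your preliminary reduction to divisible sandpiles via the least action principle is legitimate (it needs the infinite-volume least action principle from the divisible sandpile literature, but that is citable) and is a genuinely different packaging -- the paper argues directly for the Abelian model and only afterwards observes that the argument transfers to divisible sandpiles. However, the reduction buys you nothing, because the critical-density divisible sandpile on $\SG$ is exactly as hard as the original statement, and it is there that your proposal has two genuine gaps.

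First, in the supercritical case, the step ``$\mathbb{E}[v]=+\infty$ at infinitely many congruent vertices contradicts a.s.\ finiteness of $v$ via Fatou'' is a non sequitur: a random variable can be almost surely finite with infinite expectation, and Fatou's inequality points the wrong way to extract a contradiction. Second, and more seriously, in the critical case you correctly identify that the growth of $\max_i v(a_i^{(n)})$ must be localised at a fixed vertex, but the multi-scale recursion you sketch (independent fluctuations at apices, sign-symmetrisation, Kolmogorov three-series) is not carried out, and you yourself flag that its key independence and sign issues are unresolved; as written this is not a proof. Both gaps have the same one-line fix, which is what the paper does: take the exhausting triangles $\SG_n^{+}$ to all share the origin $o$ as one of their three corners. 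Then the triple $\bigl(v(o),v(x_n),v(y_n)\bigr)$ is exchangeable (the i.i.d.\ law is invariant under the automorphisms of $\SG_n^{+}$ permuting the corners), so from $\mathbb{P}\bigl(\max_i v(a_i^{(n)})\geq c_n\bigr)\geq\varepsilon$ with $c_n\to\infty$ you get $\mathbb{P}\bigl(v(o)\geq c_n\bigr)\geq\varepsilon/3$ for every large $n$, hence $\mathbb{P}\bigl(v(o)=\infty\bigr)\geq\varepsilon/3>0$, contradicting stabilization -- no recursion across scales, no almost-sure $\limsup$ statement (which would anyway require an independence argument for the nested sums $S_{n_k}$ that you do not supply), and no expectation computation is needed.
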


Above, if $\mathsf{Var}[\sigma(o)]=0$, then $\sigma$ is the constant configuration which is already stable in the case $\mathbb{E}[\sigma(o)]=3$ and does not stabilize if $\mathbb{E}[\sigma(o)]>3$. 
The proof of Theorem \ref{thm:iid-sandpile} carries over to divisible sandpiles and as a consequence we obtain in Proposition \ref{prop:div} that  an i.i.d.~divisible sandpile $\sigma$ on $\SG$ with $\mathbb{E}[\sigma(o)]\geq 1$ and $0<\mathsf{Var}[\sigma(o)]<\infty$ does not stabilize alsmot surely.
We would like to emphasize here that on $\Z^d$, for an i.i.d.~sandpile with $\mathbb{E}[\sigma(0)]=2d$ both cases - stabilization and non-stabilization - can occur \cite{fey-redig}.

\section{Preliminaries and notation}

\textbf{Graphs.} Let $G=(V,E)$ be a locally finite, undirected, infinite graph with vertex set $V$ and edge set $E\subset V\times V$, and we fix a vertex $o\in V$ where particles start their walk (random or deterministic). For an edge $e=(x,y)$ we write sometimes $x\sim y$ to denote that vertices $x,y$ are neigbours. For a subset $S\subset V$, we denote by $\partial_oS$ the outer boundary of $S$, that is,
$$\partial_o S=\{x\notin S:\ x \text{ has a neighbour in } S\}.$$
We denote by $\mathsf{deg}(x)$ the degree of $x$, i.e.~the number of vertices in $G$ that are neighbours of $x$, and by abuse of notation we use $x\in G$ to denote vertices. The graph $G$ comes with a natural metric $d(x,y)$, {\it the graph distance}, i.e.~the minimal length of a path between two vertices $x$ and $y$. 

\textbf{Rotor walks.} 
For any $x\in G$, we fix an {\it ordered family of its neighbours} $\mathsf{cyc}(x)=\{x^1,\ldots,x^{\mathsf{deg}(x)}\}$, which may be thought of as the order in which, a particle exiting $x$, visits its neighbours.
For simplicity of notation, we fix during this work the {\it anticlockwise ordering of the neighbours} for each vertex. In addition, at the beginning of the process, each vertex $x$ is equipped with an  arrow (or rotor) pointing to one of the neighbours, that indicates where should a particle leaving $x$ for the first time move to. Subsequent exits from $x$ are then determined by $\mathsf{cyc}(x)$. The set $\rho$ of all rotors on $G$ is called \emph{rotor configuration}. We represent the rotor configuration $\rho$ by a function $\rho: G\to \mathbb{N}_0$, with $\rho(x)=i\in \{1,\ldots,\mathsf{deg}(x)\}$
meaning that the rotor at $x$ points to the $i$-th neighbour $x^{i}$ in the cyclic ordering $\mathsf{cyc}(x)$ of $x$. With the rotor configuration $\rho$ and the anticlockwise ordering of the neighbours for all vertices in $G$, we define recursively a {\it rotor walk $(R_t)_{t\in\mathbb{N}}$} as the sequence of consecutive locations in $G$ of a particle initially located at some fixed vertex $o\in G$ so $R_0=o$, while the subsequent locations are determined by the following rule. For any $t\geq 1$, if the location at time $t$ is $x\in G$, so $R_t=x$ then in order to determine the next move, the particle (the walker) first increments the rotor at $x$, 
 i.e.~it changes its direction to the next neighbor in the counterclockwise order $\mathsf{cyc}(x)$, and then it follows this new direction. Thus at each time step $t$, we change not only the position of the particle, but also the rotor configuration only at the current location of the particle. In other words, one starts with $(R_0,\rho_0)=(o,\rho)$ for a fixed initial rotor configuration $\rho$,  and if at time $t$ the pair (position, configuration) is $(R_t,\rho_t)$, then at time $t+1$, $(R_{t+1},\rho_{n+1})$ is
\begin{equation*}
\rho_{t+1}(x)=
\begin{cases}
\rho_t(x)+1 \mod \mathsf{deg}(x)&, \text{ if } x= R_t\\
\rho_t(x) &, \text{ otherwise}
\end{cases}
\end{equation*}
and $R_{t+1}=R_t^{\rho_{t+1}(R_t)}$, respectively.
As described here, $(R_t)_{t\in\mathbb{N}}$ is only a deterministic process, where at each step, the walker follows the prescribed rule determined by the initial rotor configuration $\rho$ and by the cyclic ordering of the neighbours. While the deterministic rotor walk $(R_t)_{t\in \mathbb{N}}$ has itself peculiar properties that are far away from being mathematically understood, in this note we focus on rotor walks with random initial configurations.

\textbf{Uniform rotor walks.} For sake of simplicity, suppose that the underlying graph $G$ is regular, so all vertices have the same degree $\mathsf{d}\in\mathbb{N}$. A {\it random initial rotor configuration} is given by a sequence
$(\rho(x))_{x\in G}$ of independent and identically distributed (i.i.d.) random variables over $\{1,\ldots,\mathsf{d}\}$. We write $\mathbb{P}$ and $\mathbb{E}$ for probability and expectation, respectively. By using the fixed anticlockwise ordering of the neighbours for any vertex and taking as the initial rotor configuration $(\rho(x))_{x\in V}$ a sequence of i.i.d.~$\{1,\ldots,\mathsf{d}\}$- valued random variables, we then obtain a rotor walk moving according to the rotor walk rule (increment rotor, move where the rotor points to), which is a random process called {\it random rotor walk}. If $(\rho(x))_{x\in G}$ is a sequence of i.i.d.~random variables uniformly distributed on $\{1,\ldots,\mathsf{d}\}$, then $(R_t)_{t\in\mathbb{N}}$ is called {\it uniform rotor walk}, (shortly $\mathsf{URW}$). A random rotor walk is not Markovian, since for $R_{t}=x$, in order to determine the position $R_{t+1}$ of the walk at time $t+1$, we need to  know where did the particle, previously exiting $x$, move to. Once a vertex has been already visited, the rotor configuration is not random anymore, and the exits from that vertex are completely deterministic.


\textbf{Stabilization of i.i.d.~sandpiles.} There are several ways to stabilize an unstable sandpile configuration $\sigma$ on infinite graphs $G$; see \cite{fey-redig} for more details on toppling procedures and stabilizing configurations. While on $\Z^d$ the stabilization proofs rely on translation invariance, ergodic theory and potential kernel estimates, such techniques cannot immediately be applied to graphs that are not transitive.
For any sequence $(G_n)_{n\in\mathbb{N}}$ of finite subsets $G_n\subset G_{n+1}\subset G$ exhausting $G$, with $G=\cup_{n}G_n$, such that for each $n\in\mathbb{N}$, $G_n$ has a global sink $s$ given by the boundary vertices of $G_n$, the sandpile $\sigma_n$ - where $\sigma_n$ is the restriction of the globally given configuration $\sigma$ on the finite set $G_n$ of the exhaustion - stabilizes in finitely many steps and the sink collects the excess mass. We denote by $\sigma_n^{\infty}$ the stabilization of $\sigma_n$ and by $u_n(x)$ the mass emitted from $x$ during stabilization; the function $u_n(x)$ is called {\it the odometer function}. We have that
$\sigma_n^{\infty}=\sigma_n-\Delta u_n$,
where $\Delta$  is {\it  the graph Laplacian} defined as
$$\Delta u_n(x)=u_n(x)-\frac{1}{\mathsf{deg}(x)}\sum_{y\sim x}u_n(y).$$
{\it Toppling in infinite volume.} For an i.i.d.~sandpile configuration $\sigma$ on $G$, and for the sequence of increasing graphs $(G_n)$ that exhaust $G$, toppling in infinite volume reduces to first stabilizing $\sigma$ in $G_1$, then the new sandpile configuration is being stabilized in $G_2$, and so on. Since for any $n\in\N$, $G_n$ is finite, the sequence $(u_n)$ of odometer functions on $G$ is well defined. 

We call the i.i.d.~sandpile $\sigma$ \textit{stabilizable} (in infinite volume) if it exists a function $u^{\infty}:G\to\mathbb{R}$ such that $\lim_{n\to\infty}u_n=u^{\infty}$ pointwise,
where $u_n$ is the odometer function for the stabilization in $G_n$. The final stable configuration $\sigma^{\infty}:G\to\mathbb{R}$ is given by
$\sigma^{\infty}=\sigma-\Delta u^{\infty}$.
Note that the odometer functions are monotonically increasing, so the limit $\lim_{n\to\infty}u_n$ always exists, but it
may be infinite at some vertices.
Therefore $\sigma$ is not stabilzable if there exists $x\in G$ such that $\lim_{n\to\infty}u_n(x)=\infty$. If 
$\sigma$ is not stabilizable, we also call $\sigma$ an {\it exploding sandpile}.

\begin{remark}\label{rem:monotonicity_stab}
For any sandpile configuration $\sigma$ on a finite graph $G$, the corresponding odometer function can also be characterized as the smallest, non-negative function $u:G\to\mathbb{Z}$ that satisfies
$\sigma(x)-\Delta u(x) \leq \mathsf{deg}(x)-1$
for any vertex $x\in G$. Thus we infer that for two i.i.d.~sandpiles $\sigma_1,\sigma_2$ with $\sigma_1\leq \sigma_2$ a.s., if $\sigma_2$ stabilizes almost surely, so does $\sigma_1$. 
\end{remark}

\textbf{Sierpi\'nski gasket graph $\SG$} is a pre-fractal associated with the Sierpi\'nski gasket, defined as following. We consider in $\mathbb{R}^2$ the sets
$V_0=\{(0,0), (1,0), (1/2,\sqrt{3}/2)\}$
and 
\begin{equation*}
E_0=\left\{\big((0,0),(1,0)\big),\big((0,0),(1/2,\sqrt{3}/2)\big),\big((1,0),(1/2,\sqrt{3}/2)\big)\right\}.
\end{equation*}
Now recursively define $(V_1,E_1), (V_2,E_2),\ldots$ by
\begin{equation*}
V_{n+1}=V_n\cup\left\{\big(2^n,0\big)+V_n\right\}\bigcup \left\{\left(2^{n-1},2^{n-1}\sqrt{3}\right)+V_n\right\}
\end{equation*}
and 
\begin{equation*}
E_{n+1}=E_n\cup\left\{\left(2^n,0\right)+E_n\right\}\bigcup \left\{\left(2^{n-1},2^{n-1}\sqrt{3}\right)+E_n\right\},
\end{equation*}
where $(x,y)+S:=\{(x,y)+s:s\in S\}$. 
 Let $V_{\infty}=\cup_{n=0}^{\infty}V_n$ and $V_{\infty}'=\{(-x,y):\ (x,y)\in V_{\infty}\}$ be the reflection of $V_{\infty}$ around the vertical axis. Similarly, let $E_{\infty}=\cup_{n=0}^{\infty}E_n$ and denote the reflection of $E_{\infty}$ around the vertical axis by $E'_{\infty}$. Finally, let 
 $V=V_{\infty}\cup V'_{\infty}$ and $E=E_{\infty}\cup E'_{\infty}$. Then the doubly infinite Sierpi\'nski gasket graph $\SG$ (called also {\it graphical Sierpi\'nski gasket})
is the graph with vertex set $V$ and edge set $E$. The associated level-$n$ prefractal graphs $\SG_n$ are the graphs with vertex set $V_n\cup V_n'$ and edge set $E_n\cup E_n'$, where as above $V_n'$ and $E_n'$ are the reflections around the $y$-axis of $V_n$ and $E_n$ respectively. So we can write $\SG=\cup_{n\in\mathbb{N}}\SG_n$.
See Figure \ref{fig:gasket} for a graphical representation of $\SG$. Denote by $\SG^{+}$ (respectively  $\SG_n^{+}$) the graph with vertex set $V_{\infty}$ (respectively $V_n$) and edge set $E_{\infty}$ (respectively), and similarly $\SG^{-}$ and $\SG_n^{-}$ for their respective reflections around the $y$-axis, so we can finally write $\SG=\SG^{+}\cup\SG^{-}$ and $\SG_n=\SG_n^{+}\cup\SG_n^{-}$. 
Set the origin $o=(0,0)$.

\begin{small}
\begin{figure}
\label{fig:gasket}
	\centering
	\input{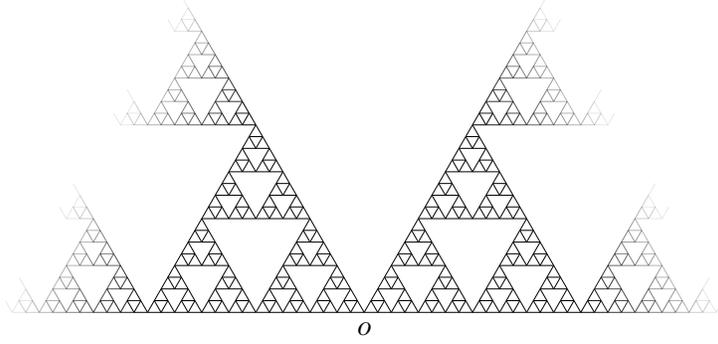}
	\caption{\label{fig:gasket} Doubly-infinite Sierpinski gasket graph $\SG$.}
\end{figure}
\end{small}

\section{Uniform rotor walks on the Sierpi\'nski gasket}

For the rest we fix $\SG$ to be the doubly-infinite Sierpi\'nski gasket graph with root vertex (lower corner) $o=(0,0)$, and $\SG_n$, $n\in \mathbb{N}$ the level-$n$ prefractals approximating $\SG$ with $\SG=\cup_{n\in\mathbb{N}}\SG_n$. All the vertices in $\SG$ have degree $4$, and for any $x\in \SG$ we fix the anticlockwise ordering of the neighbours, and the initial random configuration of rotors uniformly distributed over the set $\{1,2,3,4\}$, that is, the rotor at $x$ points initially to each of the four possible neighbours with probability $\frac{1}{4}$.

For any initial random rotor configuration $\rho$ on $\SG$, any rotor mechanism, and any starting position, one can see that the random rotor walk either visits each vertex infinitely many times almost surely, and we call this walk {\it recurrent}, or visits each vertex only finitely many times almost surely, and we call the walk {\it transient}. In particular, if the uniform rotor walk started at the origin $o\in \SG$ returns to the origin infinitely many times almost surely, then the uniform rotor walk is recurrent. We prove recurrence of $(R_t)_{t\in\mathbb{N}}$ on $\SG$ by using {\it sets with reflecting boundary} as introduced in \cite{holroyd-angel-rec-cfg}.

\begin{definition}
For a given rotor mechanism  (i.e.~cyclic configuration) and an initial rotor configuration $\rho$ on $\SG$, we say that a subset $S$ of vertices in $\SG$ has \emph{ reflecting boundary} if for every vertex $y$ in the outer boundary $\partial_oS$, the rotor at $y$ will first send the particle to each of the $y$'s neigbours in $S$, before sending it to any other neighbour of $y$. 
\end{definition}

The self-similar nature of $\SG$ together with the existence of the cut points that disconnect the gasket into finitely many connected components, allows one to prove the existence of infinitely many sets with reflecting boundary, almost surely, thus proving Theorem \ref{thm:rec}. We recall first \cite[Proposition 7]{holroyd-angel-rec-cfg},  which claims that if for some rotor configuration $\rho$, every finite set of vertices is a subset of some finite set with reflecting boundary, then the rotor walk with rotor configuration $\rho$ starting from any other vertex is recurrent. Additionally, we state also       
 \cite[Lemma 9]{holroyd-angel-rec-cfg} which will be used in the proof below: if $S$ is a set with reflecting boundary, then the rotor walk started at $x\in S$ will return to $x$ before leaving $S\cup\partial_o S$. 
 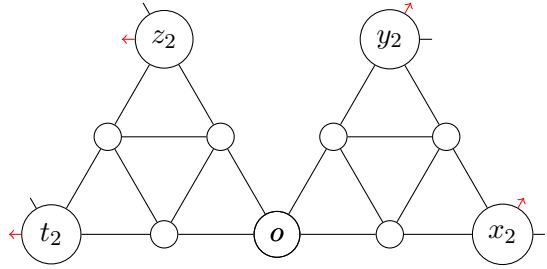
\begin{figure}  
        \centering
        \begin{tikzpicture}[scale=0.75]
        \node[shape=circle,draw=black] (A) at (0,0) {$o$};
        \node[shape=circle,draw=black] (B) at (2,0) {};
        \node[shape=circle,draw=black] (C) at (4,0) {$x_2$};
        \node[shape=circle,draw=black] (D) at (1,1.73) {};
        \node[shape=circle,draw=black] (E) at (3,1.73) {};
        \node[shape=circle,draw=black] (F) at (2,1.73*2) {$y_2$} ;
        
        \node[shape=circle,draw=none] (A1) at (-1,0) {};
        \node[shape=circle,draw=none] (A2) at (-1/2,-1.73/2) {};
        \node[shape=circle,draw=none] (B1) at (5,0) {};
        \node[shape=circle,draw=none] (B2) at (4+1/2,1.73/2) {};
        \node[shape=circle,draw=none] (C1) at (3,1.73*2) {};
        \node[shape=circle,draw=none] (C2) at (5/2,1.73*2+1.73/2) {};
        
        \path [-] (A) edge node[left] {} (B);
        \path [-] (A) edge node[left] {} (D);
        \path [-] (B) edge node[left] {} (D);
        \path [-] (B) edge node[left] {} (C);
        \path [-] (E) edge node[left] {} (B);
        \path [-] (E) edge node[left] {} (C);
        \path [-] (F) edge node[left] {} (E);
        \path [-] (F) edge node[left] {} (D);
        \path [-] (E) edge node[left] {} (D);
        
        \path [-] (C) edge node[left] {} (B1);
        \path [->, draw = red] (C) edge node[left] {} (B2);
        \path [-] (F) edge node[left] {} (C1);
        \path [->, draw=red] (F) edge node[left] {} (C2);
        
        \node[shape=circle,draw=black] (-A) at (0,0) {$o$};
        \node[shape=circle,draw=black] (-B) at (-2,0) {};
        \node[shape=circle,draw=black] (-C) at (-4,0) {$t_2$};
        \node[shape=circle,draw=black] (-D) at (-1,1.73) {};
        \node[shape=circle,draw=black] (-E) at (-3,1.73) {};
        \node[shape=circle,draw=black] (-F) at (-2,1.73*2) {$z_2$} ;
        
        \node[shape=circle,draw=none] (-B1) at (-5,0) {};
        \node[shape=circle,draw=none] (-B2) at (-4-1/2,1.73/2) {};
        \node[shape=circle,draw=none] (-C1) at (-3,1.73*2) {};
        \node[shape=circle,draw=none] (-C2) at (-5/2,1.73*2+1.73/2) {};
        
        \path [-] (-A) edge node[left] {} (-B);
        \path [-] (-A) edge node[left] {} (-D);
        \path [-] (-B) edge node[left] {} (-D);
        \path [-] (-B) edge node[left] {} (-C);
        \path [-] (-E) edge node[left] {} (-B);
        \path [-] (-E) edge node[left] {} (-C);
        \path [-] (-F) edge node[left] {} (-E);
        \path [-] (-F) edge node[left] {} (-D);
        \path [-] (-E) edge node[left] {} (-D);
        
        \path [->, draw=red] (-C) edge node[left] {} (-B1);
        \path [-] (-C) edge node[left] {} (-B2);
        \path [->, draw=red] (-F) edge node[left] {} (-C1);
        \path [-] (-F) edge node[left] {} (-C2);
        \end{tikzpicture}
         \caption{The sets $SG_2$ and $S_2=SG_2\setminus\{x_2,y_2,z_2,t_2\}$ together with the rotors at the outer boundary of $S_2$ such that $S_2$ has reflecting boundary.}
    \label{fig:sg2}
    \end{figure}
\begin{proof}[Proof of Theorem \ref{thm:rec}]
For any $n\in\mathbb{N}$ and $\SG_n\subset \SG$, we define $S_n=SG_n\setminus\{x_n,y_n,z_n,t_n\}$ where the vertices $\{x_n,y_n,z_n,t_n\}$ represent the four inner boundary vertices of $\SG_n$, i.e.~the set of vertices of $\SG_n$ that have a neighbour in $\SG\setminus \SG_n$. Using the definition of $\SG$, we can represent these four vertices by their coordinates in $\mathbb{R}^2$ as $x_n=(2^n,0)$, $y_n=(2^{n-1},2^{n-1}\sqrt{3})$,
$z_n=(-2^{n-1},2^{n-1}\sqrt{3})$ and $t_n=(-2^n,0)$. So the outer boundary of $S_n$ is 
$\partial_oS_n=\{x_n,y_n,z_n,t_n\}$.
Consider now the uniform rotor configuration on $\SG$, given by a sequence $(\rho(x))_{x\in \SG}$ of i.i.d.~random variables, such that for each $x\in\SG$, $\rho(x)$ points with probability $1/4$ to each of the four neighbours, and suppose that the corresponding rotor walk starts at $o=(0,0)$. Recall that we have assumed for any $x\in \SG$ an anticlockwise ordering of the neighbours. Then, for the uniform random configuration $\rho$ and anticlockwise ordering of the neighbours, for each $y\in\partial_oS_n$, with probability $1/4$, the rotor at $y$ points to a neighbour that will send the particle first to each neighbours in $S_n$ before sending it to the other neighbours. Since the random variables $(\rho(y))_{y\in \partial_oS_n}$ are i.i.d., this implies that $S_n$ has a reflecting boundary with probability $\left(\frac{1}{4}\right)^4=\frac{1}{256}$,  for all $n\in\mathbb{N}$. See Figure  \ref{fig:sg2} and Figure  \ref{fig:sg3} for the sets $S_2$ and $S_3$ respectively, and the corresponding rotor configurations at the outer boundary points for which $S_2$ and $S_3$ are sets with reflecting boundary. Consider now the sequence of events $(A_n)_{n\in\mathbb{N}}$ given by
$$A_n=\{S_n \text{ has reflecting boundary}\},$$
which, by the definition of $S_n$ and of the uniform rotor walk, are independent. Since $\sum_{n\in\mathbb{N}}\mathbb{P}(A_n)=\infty$, Borel-Cantelli Lemma implies that infinitely many of the events $A_n$ occur, almost surely. Therefore, with probability one, there are infinitely many subsets $S_n$ of $\SG$ with reflecting boundary.

Now the recurrence of the uniform rotor walk on $\SG$ follows from \cite[Lemma 9]{holroyd-angel-rec-cfg}. Due to the construction of the nested sets $S_n$ (i.e for $n_1<n_2$, we have $S_{n_1}\subset S_{n_2}$) with reflecting boundary, since each of the $S_n$ contains the origin, and there are infinitely many sets $S_n$ with reflecting boundary almost surely, we thus obtain that the origin is visited infinitely many times by the uniform rotor walk, almost surely, and this proves recurrence of the rotor walk started at the origin.
If the uniform rotor walk starts at another vertex $x$ different than the origin, the recurrence follows from  \cite[Proposition 7]{holroyd-angel-rec-cfg}, since with our choice of the sets $S_n$, for any finite subset $B\subset \SG$, we can find an index big enough $n'$ such that $S_{n'}$ has reflecting boundary and $B\subset S_{n'}$ almost surely. This yields the recurrence of the uniform rotor walk for any other starting point.
\end{proof}
We would like to point out  that in the proof above we have considered uniform rotor walk, but any other random initial configuration of rotors $(\rho(x))_{x\in\SG}$ that points to one of the four possible neighbours with probability $p_i$, $i\in\{1,2,3,4\}$ and fullfils $(\min\{p_1,p_2,p_3,p_4\})^4=c>0$, works as well in the proof.

\begin{proposition}
A random rotor walk $(R_t)_{t\in\mathbb{N}}$ with i.i.d.~random initial configuration $(\rho(x))_{x\in\SG}$ such that for each $x\in\SG$, $\rho(x)$ points to each of the neighbours $x^i$ of $x$ with positive probability $p_i>0$ for $i\in\{1,2,3,4\}$, is recurrent.
\end{proposition}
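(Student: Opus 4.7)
The plan is to mimic exactly the proof of Theorem \ref{thm:rec}, replacing the uniform probability $1/4$ by a uniform lower bound $p=\min\{p_1,p_2,p_3,p_4\}>0$ on the probability that the rotor at a single vertex initially points to any prescribed neighbour. Since the hypothesis guarantees $p>0$, all the estimates used for the uniform case survive with a worse (but still strictly positive) constant, which is all Borel--Cantelli needs.

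More precisely, I would again consider the nested sets $S_n = \SG_n\setminus\{x_n,y_n,z_n,t_n\}$, where $x_n=(2^n,0)$, $y_n=(2^{n-1},2^{n-1}\sqrt{3})$, $z_n=(-2^{n-1},2^{n-1}\sqrt{3})$, $t_n=(-2^n,0)$ are the four inner corners of $\SG_n$, so that $\partial_o S_n = \{x_n,y_n,z_n,t_n\}$. For each $n$, the event $A_n=\{S_n \text{ has reflecting boundary}\}$ depends only on the rotors at the four vertices in $\partial_o S_n$: each of these vertices has a distinguished initial direction (determined by the anticlockwise ordering fixed in the paper) which forces the rotor to serve the neighbours in $S_n$ before any outside neighbour. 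By the i.i.d.\ assumption and independence of the rotors at the four boundary vertices,
\[
\mathbb{P}(A_n) \;\geq\; p^4 \;=\; (\min\{p_1,p_2,p_3,p_4\})^4 \;>\; 0.
\]

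The key observation, already present in the proof of Theorem \ref{thm:rec}, is that for $n_1\neq n_2$ the boundary sets $\partial_o S_{n_1}$ and $\partial_o S_{n_2}$ are disjoint (their coordinates are determined by distinct powers of $2$). Hence the events $(A_n)_{n\in\mathbb{N}}$ are independent, and since $\sum_n \mathbb{P}(A_n)=\infty$, the Borel--Cantelli lemma yields $\mathbb{P}(A_n \text{ i.o.})=1$. Thus, almost surely, there exist infinitely many $n$ for which $S_n$ has reflecting boundary.

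Finally, I invoke the two results of Angel and Holroyd cited in the paper: by \cite[Lemma 9]{holroyd-angel-rec-cfg}, if $S_n$ has reflecting boundary, the walk started at the origin returns to the origin before leaving $S_n\cup\partial_o S_n$, so the origin is visited infinitely often almost surely; and for a walk started at any other vertex $x\in\SG$, \cite[Proposition 7]{holroyd-angel-rec-cfg} together with the fact that every finite subset of $\SG$ is almost surely contained in some $S_n$ with reflecting boundary (since the $S_n$ exhaust $\SG$ and infinitely many of them have reflecting boundary) gives recurrence. No step should be a real obstacle: the only content beyond Theorem \ref{thm:rec} is the elementary replacement of $(1/4)^4$ by $p^4$, and the remark at the end of the previous proof essentially anticipates this generalization.
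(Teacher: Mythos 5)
Your proposal is correct and follows exactly the same route as the paper, which itself notes that the proof is identical to that of Theorem \ref{thm:rec} except that the reflecting-boundary probability $(1/4)^4$ is replaced by $(\min\{p_1,p_2,p_3,p_4\})^4>0$. Your lower bound $\mathbb{P}(A_n)\geq p^4$ is all that Borel--Cantelli requires, so nothing further is needed.
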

The proof is exactly the same as the one of Theorem \ref{thm:rec}, with the minor difference that the sets $S_n$ have reflecting boundary with probability $(\min\{p_1,p_2,p_3,p_4\})^4=c>0$, which is a condition generally known as the elliptic assumption.

\begin{figure}
       \centering
        \begin{tikzpicture}[scale=0.9]
        \node[shape=circle,draw=black] (A) at (0,0) {$o$};
        \node[shape=circle,draw=black] (B) at (2,0) {};
        \node[shape=circle,draw=black] (C) at (4,0) {$x_3$};
        \node[shape=circle,draw=black] (D) at (1,1.73) {};
        \node[shape=circle,draw=black] (E) at (3,1.73) {};
        \node[shape=circle,draw=black] (F) at (2,1.73*2) {$y_3$} ;
        \node[shape=circle,draw=black] (G) at (1,0) {};
        \node[shape=circle,draw=black] (H) at (3,0) {};
        \node[shape=circle,draw=black] (I) at (1/2,1.73/2) {};
        \node[shape=circle,draw=black] (J) at (3/2,1.73/2) {};
        \node[shape=circle,draw=black] (K) at (5/2,1.73/2) {};
        \node[shape=circle,draw=black] (L) at (7/2,1.73/2) {};
        \node[shape=circle,draw=black] (M) at (3/2,1.73/2+1.73) {};
        \node[shape=circle,draw=black] (N) at (5/2,1.73/2+1.73) {};
        \node[shape=circle,draw=black] (O) at (2,1.73) {};
        
        \node[shape=circle,draw=none] (A1) at (-1,0) {};
        \node[shape=circle,draw=none] (A2) at (-1/2,-1.73/2) {};
        \node[shape=circle,draw=none] (B1) at (5,0) {};
        \node[shape=circle,draw=none] (B2) at (4+1/2,-1.73/2) {};
        \node[shape=circle,draw=none] (C1) at (3,1.73*2) {};
        \node[shape=circle,draw=none] (C2) at (5/2,1.73*2+1.73/2) {};
        
        \node[shape=circle,draw=none] (X) at (4+1/2,1.73/2) {};
        \path[->, draw=red] (C) edge node[left] {} (X);
        
        \node[shape=circle, draw=none] (Y) at (2+1/2,2*1.73+1.73/2) {};
        \path[->, draw=red] (F) edge node[left] {} (Y);
        
        \path[-] (C) edge node[left] {} (B1);
        \path[-] (F) edge node[left] {} (C1);
        
        \path [-] (A) edge node[left] {} (G);
        \path [-] (B) edge node[left] {} (G);
        \path [-] (A) edge node[left] {} (I);
        \path [-] (D) edge node[left] {} (I);
        \path [-] (B) edge node[left] {} (J);
        \path [-] (D) edge node[left] {} (J);
        \path [-] (I) edge node[left] {} (J);
        \path [-] (I) edge node[left] {} (G);
        \path [-] (J) edge node[left] {} (G);
        
        \path [-] (B) edge node[left] {} (H);
        \path [-] (C) edge node[left] {} (H);
        \path [-] (E) edge node[left] {} (K);
        \path [-] (E) edge node[left] {} (L);
        \path [-] (B) edge node[left] {} (K);
        \path [-] (C) edge node[left] {} (L);
        \path [-] (H) edge node[left] {} (K);
        \path [-] (H) edge node[left] {} (L);
        \path [-] (L) edge node[left] {} (K);
        
        \path [-] (F) edge node[left] {} (M);
        \path [-] (F) edge node[left] {} (N);
        \path [-] (E) edge node[left] {} (N);
        \path [-] (E) edge node[left] {} (O);
        \path [-] (D) edge node[left] {} (O);
        \path [-] (D) edge node[left] {} (M);
        \path [-] (M) edge node[left] {} (N);
        \path [-] (O) edge node[left] {} (N);
        \path [-] (O) edge node[left] {} (M);
        
        \node[shape=circle,draw=black] (A-) at (0,0) {$o$};
        \node[shape=circle,draw=black] (B-) at (-2,0) {};
        \node[shape=circle,draw=black] (C-) at (-4,0) {$t_3$};
        \node[shape=circle,draw=black] (D-) at (-1,1.73) {};
        \node[shape=circle,draw=black] (E-) at (-3,1.73) {};
        \node[shape=circle,draw=black] (F-) at (-2,1.73*2) {$z_3$} ;
        \node[shape=circle,draw=black] (G-) at (-1,0) {};
        \node[shape=circle,draw=black] (H-) at (-3,0) {};
        \node[shape=circle,draw=black] (I-) at (-1/2,1.73/2) {};
        \node[shape=circle,draw=black] (J-) at (-3/2,1.73/2) {};
        \node[shape=circle,draw=black] (K-) at (-5/2,1.73/2) {};
        \node[shape=circle,draw=black] (L-) at (-7/2,1.73/2) {};
        \node[shape=circle,draw=black] (M-) at (-3/2,1.73/2+1.73) {};
        \node[shape=circle,draw=black] (N-) at (-5/2,1.73/2+1.73) {};
        \node[shape=circle,draw=black] (O-) at (-2,1.73) {};
        
        \node[shape=circle,draw=none] (B1-) at (-5,0) {};

        \node[shape=circle,draw=none] (C1-) at (-3,1.73*2) {};

        \node[shape=circle,draw=none] (X-) at (-4-1/2,1.73/2) {};
        \path[-] (C-) edge node[left] {} (X-);
        
        \node[shape=circle, draw=none] (Y-) at (-2-1/2,2*1.73+1.73/2) {};
        \path[-] (F-) edge node[left] {} (Y-);
        
        \path[->, draw=red] (C-) edge node[left] {} (B1-);
        \path[->, draw=red] (F-) edge node[left] {} (C1-);
        
        \path [-] (A-) edge node[left] {} (G-);
        \path [-] (B-) edge node[left] {} (G-);
        \path [-] (A-) edge node[left] {} (I-);
        \path [-] (D-) edge node[left] {} (I-);
        \path [-] (B-) edge node[left] {} (J-);
        \path [-] (D-) edge node[left] {} (J-);
        \path [-] (I-) edge node[left] {} (J-);
        \path [-] (I-) edge node[left] {} (G-);
        \path [-] (J-) edge node[left] {} (G-);
        
        \path [-] (B-) edge node[left] {} (H-);
        \path [-] (C-) edge node[left] {} (H-);
        \path [-] (E-) edge node[left] {} (K-);
        \path [-] (E-) edge node[left] {} (L-);
        \path [-] (B-) edge node[left] {} (K-);
        \path [-] (C-) edge node[left] {} (L-);
        \path [-] (H-) edge node[left] {} (K-);
        \path [-] (H-) edge node[left] {} (L-);
        \path [-] (L-) edge node[left] {} (K-);
        
        \path [-] (F-) edge node[left] {} (M-);
        \path [-] (F-) edge node[left] {} (N-);
        \path [-] (E-) edge node[left] {} (N-);
        \path [-] (E-) edge node[left] {} (O-);
        \path [-] (D-) edge node[left] {} (O-);
        \path [-] (D-) edge node[left] {} (M-);
        \path [-] (M-) edge node[left] {} (N-);
        \path [-] (O-) edge node[left] {} (N-);
        \path [-] (O-) edge node[left] {} (M-);
        \end{tikzpicture}
         \caption{The sets $SG_3$ and $S_3=SG_3\setminus\{x_3,y_3,z_3,t_3\}$ together with the rotors at the outer boundary of $S_3$ such that $S_3$ has reflecting boundary.}
          \label{fig:sg3}
        \end{figure}
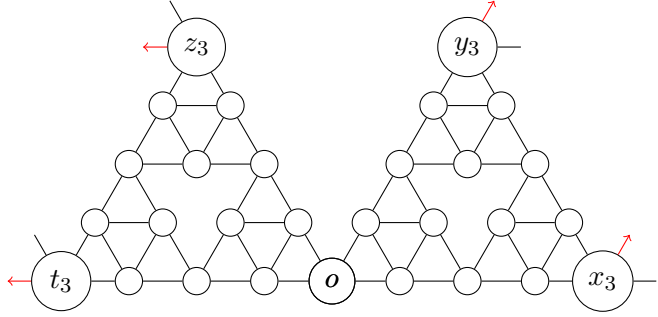

\section{Stabilization of i.i.d.~sandpiles}

Let $(\sigma(x))_{x\in\SG}$ be an i.i.d.~sandpile on $\SG$. 
For $\SG=\cup_{n\in\mathbb{N}}\SG_n$, and each $n\in \mathbb{N}$, we take again $\SG_n=\SG_n^{+}\cup\SG_n^{-}$ as the level $n$-gasket with sink vertices $\{x_n,y_n,z_n,t_n\}$ (the four corners). Similar to \cite[Theorem 3.5]{fey-meester-redig-2009}, we give a necessary condition for an i.i.d.~sandpile on the gasket $\SG$ to stabilize. 
Our method explores the existence of cut points in the gasket, so in order for the excess mass to pass from one iteration $\SG_n$ to the next one $\SG_{n+1}$ of the gasket $\SG$, it has to pass through the cut points. Together with the Abelian property of the sandpile model, this enables to run the sandpile in waves.
From now on,  we assume that all our sandpiles have finite variance of the number of chips, 
$\sigma_0^2:=\mathsf{Var}[\sigma(o)]<\infty$.

For the {\it toppling in infinite volume} on $\SG$, we use as the sequence of increasing subsets of $\SG$ the level-$n$ prefractals $\SG_n$, with $\SG=\cup_{n\in\N}\SG_n$, and for every $n\in \N$ we also use the partition of $\SG_n$ in the right triangle $\SG_n^{+}$ with three boundary vertices $\{o,x_n,y_n\}$ and the left triangle $\SG_n^{-}$ with three boundary vertices $\{o,z_n,t_n\}$. As sinks we use the outer boundary $\partial_o \SG_n$. For an i.i.d.~sandpile  $\sigma$ on $\SG$, in order to stabilize $\sigma$ on $\SG_n$, by the Abelian property of the model, we can stabilize it first on $\SG_n^{+}$, and investigate the mass that has to be emitted from the origin $o$ in the course of the stabilization. Then, in order to stabilize the sandpile on the left triangle $\SG_n^{-}$, the odometer at $o$ can only increase. So, bounding from below the odometer at $o$ during the stabilization of $\sigma$ on $\SG_n^{+}$ by a function that goes to $\infty$ with $n$, and showing that such an event occurs with positive probability together with a Borel-Cantelli type argument proves Theorem \ref{thm:iid-sandpile}. 
First of all, for any $m,n\in\mathbb{N}$ such that $m<n-1$, the graph $\SG^+$ contains three isomorphic copies of $\SG_m^+$ 
on its three corners as in Figure \ref{fig:inner_hex}. Thus, the copy $\SG_m^+$ has $o$ as bottom left corner, $x_m$ as bottom right corner and $y_m$ as top corner. The $120^{\circ}$ counterclockwise rotation of the copy $\SG_m^+$ in $\SG_n$ has $x_n$ as bottom right corner, and we denote by $a_m$ and $b_m$ the top and bottom left corner of this copy respectively. Finally the $240^{\circ}$ counterclockwise rotation of the copy $\SG_m^+$ has $y_n$ as top corner and we denote by $c_m$ and $d_m$ the bottom left and bottom right corner respectively.
Denote by $\SG_{m,n}$ the graph obtained by removing these three copies from $\SG^+$. If $m=n-1$, then $\SG_{n-1,n}$ is only a triangle, so it makes sense to assume that $m<n-1$. In this case, $\SG_{m,n}$ is a subgraph of $\SG_n^+$, and we denote by $V_{m,n}$ the vertex set of $\SG_{m,n}$. The six inner boundary vertices of $\SG_{m,n}$ by are  given by $\{x_m,y_m,a_m,b_m,c_m,d_m\}$. 

For the next three Lemmas, we assume that we are in the setting of Theorem  \ref{thm:iid-sandpile}, that is $\sigma=(\sigma(x))_{x\in\SG}$ is an i.i.d.~sandpile configuration on $\SG$ with $\mathbb{E}[\sigma(o)]\geq 3$, and we write $N_{m,n}=\sum_{x\in V_{m,n}}\sigma(x)$.

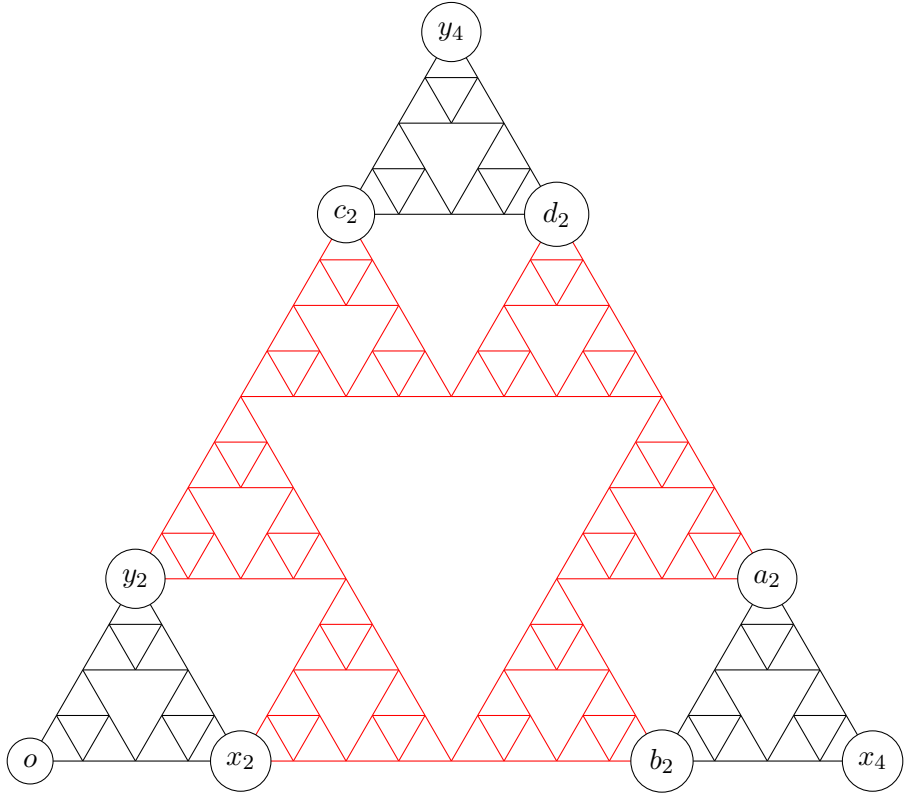
\begin{figure}
\centering
\begin{tikzpicture}[scale=0.7]
\draw[color=black] (0, 0)--(1, 0);
\draw[color=black] (1, 0)--(0.5, 0.8660254037844386);
\draw[color=black] (0, 0)--(0.5, 0.8660254037844386);
\draw[color=black] (1, 0)--(2, 0);
\draw[color=black] (0.5, 0.8660254037844386)--(1.5, 0.8660254037844386);
\draw[color=black] (2, 0)--(1.5, 0.8660254037844386);
\draw[color=black] (1.5, 0.8660254037844386)--(1.0, 1.7320508075688772);
\draw[color=black] (1, 0)--(1.5, 0.8660254037844386);
\draw[color=black] (0.5, 0.8660254037844386)--(1.0, 1.7320508075688772);
\draw[color=black] (2, 0)--(3, 0);
\draw[color=black] (1, 1.7320508075688772)--(2, 1.7320508075688772);
\draw[color=black] (3, 0)--(2.5, 0.8660254037844386);
\draw[color=black] (2, 1.7320508075688772)--(1.5, 2.598076211353316);
\draw[color=black] (2, 0)--(2.5, 0.8660254037844386);
\draw[color=black] (1, 1.7320508075688772)--(1.5, 2.598076211353316);
\draw[color=black] (3, 0)--(4, 0);
\draw[color=black] (2, 1.7320508075688772)--(3, 1.7320508075688772);
\draw[color=black] (2.5, 0.8660254037844386)--(3.5, 0.8660254037844386);
\draw[color=black] (1.5, 2.598076211353316)--(2.5, 2.598076211353316);
\draw[color=black] (4, 0)--(3.5, 0.8660254037844386);
\draw[color=black] (3, 1.7320508075688772)--(2.5, 2.598076211353316);
\draw[color=black] (3.5, 0.8660254037844386)--(3.0, 1.7320508075688772);
\draw[color=black] (2.5, 2.598076211353316)--(2.0, 3.4641016151377544);
\draw[color=black] (3, 0)--(3.5, 0.8660254037844386);
\draw[color=black] (2, 1.7320508075688772)--(2.5, 2.598076211353316);
\draw[color=black] (2.5, 0.8660254037844386)--(3.0, 1.7320508075688772);
\draw[color=black] (1.5, 2.598076211353316)--(2.0, 3.4641016151377544);
\draw[color=red] (4, 0)--(5, 0);
\draw[color=red] (2, 3.4641016151377544)--(3, 3.4641016151377544);
\draw[color=red] (5, 0)--(4.5, 0.8660254037844386);
\draw[color=red] (3, 3.4641016151377544)--(2.5, 4.330127018922193);
\draw[color=red] (4, 0)--(4.5, 0.8660254037844386);
\draw[color=red] (2, 3.4641016151377544)--(2.5, 4.330127018922193);
\draw[color=red] (5, 0)--(6, 0);
\draw[color=red] (3, 3.4641016151377544)--(4, 3.4641016151377544);
\draw[color=red] (4.5, 0.8660254037844386)--(5.5, 0.8660254037844386);
\draw[color=red] (2.5, 4.330127018922193)--(3.5, 4.330127018922193);
\draw[color=red] (6, 0)--(5.5, 0.8660254037844386);
\draw[color=red] (4, 3.4641016151377544)--(3.5, 4.330127018922193);
\draw[color=red] (5.5, 0.8660254037844386)--(5.0, 1.7320508075688772);
\draw[color=red] (3.5, 4.330127018922193)--(3.0, 5.196152422706632);
\draw[color=red] (5, 0)--(5.5, 0.8660254037844386);
\draw[color=red] (3, 3.4641016151377544)--(3.5, 4.330127018922193);
\draw[color=red] (4.5, 0.8660254037844386)--(5.0, 1.7320508075688772);
\draw[color=red] (2.5, 4.330127018922193)--(3.0, 5.196152422706632);
\draw[color=red] (6, 0)--(7, 0);
\draw[color=red] (4, 3.4641016151377544)--(5, 3.4641016151377544);
\draw[color=red] (5, 1.7320508075688772)--(6, 1.7320508075688772);
\draw[color=red] (3, 5.196152422706632)--(4, 5.196152422706632);
\draw[color=red] (7, 0)--(6.5, 0.8660254037844386);
\draw[color=red] (5, 3.4641016151377544)--(4.5, 4.330127018922193);
\draw[color=red] (6, 1.7320508075688772)--(5.5, 2.598076211353316);
\draw[color=red] (4, 5.196152422706632)--(3.5, 6.06217782649107);
\draw[color=red] (6, 0)--(6.5, 0.8660254037844386);
\draw[color=red] (4, 3.4641016151377544)--(4.5, 4.330127018922193);
\draw[color=red] (5, 1.7320508075688772)--(5.5, 2.598076211353316);
\draw[color=red] (3, 5.196152422706632)--(3.5, 6.06217782649107);
\draw[color=red] (7, 0)--(8, 0);
\draw[color=red] (5, 3.4641016151377544)--(6, 3.4641016151377544);
\draw[color=red] (6, 1.7320508075688772)--(7, 1.7320508075688772);
\draw[color=red] (4, 5.196152422706632)--(5, 5.196152422706632);
\draw[color=red] (6.5, 0.8660254037844386)--(7.5, 0.8660254037844386);
\draw[color=red] (4.5, 4.330127018922193)--(5.5, 4.330127018922193);
\draw[color=red] (5.5, 2.598076211353316)--(6.5, 2.598076211353316);
\draw[color=red] (3.5, 6.06217782649107)--(4.5, 6.06217782649107);
\draw[color=red] (8, 0)--(7.5, 0.8660254037844386);
\draw[color=red] (6, 3.4641016151377544)--(5.5, 4.330127018922193);
\draw[color=red] (7, 1.7320508075688772)--(6.5, 2.598076211353316);
\draw[color=red] (5, 5.196152422706632)--(4.5, 6.06217782649107);
\draw[color=red] (7.5, 0.8660254037844386)--(7.0, 1.7320508075688772);
\draw[color=red] (5.5, 4.330127018922193)--(5.0, 5.196152422706632);
\draw[color=red] (6.5, 2.598076211353316)--(6.0, 3.4641016151377544);
\draw[color=red] (4.5, 6.06217782649107)--(4.0, 6.928203230275509);
\draw[color=red] (7, 0)--(7.5, 0.8660254037844386);
\draw[color=red] (5, 3.4641016151377544)--(5.5, 4.330127018922193);
\draw[color=red] (6, 1.7320508075688772)--(6.5, 2.598076211353316);
\draw[color=red] (4, 5.196152422706632)--(4.5, 6.06217782649107);
\draw[color=red] (6.5, 0.8660254037844386)--(7.0, 1.7320508075688772);
\draw[color=red] (4.5, 4.330127018922193)--(5.0, 5.196152422706632);
\draw[color=red] (5.5, 2.598076211353316)--(6.0, 3.4641016151377544);
\draw[color=red] (3.5, 6.06217782649107)--(4.0, 6.928203230275509);
\draw[color=red] (8, 0)--(9, 0);
\draw[color=red] (4, 6.928203230275509)--(5, 6.928203230275509);
\draw[color=red] (9, 0)--(8.5, 0.8660254037844386);
\draw[color=red] (5, 6.928203230275509)--(4.5, 7.794228634059947);
\draw[color=red] (8, 0)--(8.5, 0.8660254037844386);
\draw[color=red] (4, 6.928203230275509)--(4.5, 7.794228634059947);
\draw[color=red] (9, 0)--(10, 0);
\draw[color=red] (5, 6.928203230275509)--(6, 6.928203230275509);
\draw[color=red] (8.5, 0.8660254037844386)--(9.5, 0.8660254037844386);
\draw[color=red] (4.5, 7.794228634059947)--(5.5, 7.794228634059947);
\draw[color=red] (10, 0)--(9.5, 0.8660254037844386);
\draw[color=red] (6, 6.928203230275509)--(5.5, 7.794228634059947);
\draw[color=red] (9.5, 0.8660254037844386)--(9.0, 1.7320508075688772);
\draw[color=red] (5.5, 7.794228634059947)--(5.0, 8.660254037844386);
\draw[color=red] (9, 0)--(9.5, 0.8660254037844386);
\draw[color=red] (5, 6.928203230275509)--(5.5, 7.794228634059947);
\draw[color=red] (8.5, 0.8660254037844386)--(9.0, 1.7320508075688772);
\draw[color=red] (4.5, 7.794228634059947)--(5.0, 8.660254037844386);
\draw[color=red] (10, 0)--(11, 0);
\draw[color=red] (6, 6.928203230275509)--(7, 6.928203230275509);
\draw[color=red] (9, 1.7320508075688772)--(10, 1.7320508075688772);
\draw[color=red] (5, 8.660254037844386)--(6, 8.660254037844386);
\draw[color=red] (11, 0)--(10.5, 0.8660254037844386);
\draw[color=red] (7, 6.928203230275509)--(6.5, 7.794228634059947);
\draw[color=red] (10, 1.7320508075688772)--(9.5, 2.598076211353316);
\draw[color=red] (6, 8.660254037844386)--(5.5, 9.526279441628825);
\draw[color=red] (10, 0)--(10.5, 0.8660254037844386);
\draw[color=red] (6, 6.928203230275509)--(6.5, 7.794228634059947);
\draw[color=red] (9, 1.7320508075688772)--(9.5, 2.598076211353316);
\draw[color=red] (5, 8.660254037844386)--(5.5, 9.526279441628825);
\draw[color=red] (11, 0)--(12, 0);
\draw[color=red] (7, 6.928203230275509)--(8, 6.928203230275509);
\draw[color=red] (10, 1.7320508075688772)--(11, 1.7320508075688772);
\draw[color=red] (6, 8.660254037844386)--(7, 8.660254037844386);
\draw[color=red] (10.5, 0.8660254037844386)--(11.5, 0.8660254037844386);
\draw[color=red] (6.5, 7.794228634059947)--(7.5, 7.794228634059947);
\draw[color=red] (9.5, 2.598076211353316)--(10.5, 2.598076211353316);
\draw[color=red] (5.5, 9.526279441628825)--(6.5, 9.526279441628825);
\draw[color=red] (12, 0)--(11.5, 0.8660254037844386);
\draw[color=red] (8, 6.928203230275509)--(7.5, 7.794228634059947);
\draw[color=red] (11, 1.7320508075688772)--(10.5, 2.598076211353316);
\draw[color=red] (7, 8.660254037844386)--(6.5, 9.526279441628825);
\draw[color=red] (11.5, 0.8660254037844386)--(11.0, 1.7320508075688772);
\draw[color=red] (7.5, 7.794228634059947)--(7.0, 8.660254037844386);
\draw[color=red] (10.5, 2.598076211353316)--(10.0, 3.4641016151377544);
\draw[color=red] (6.5, 9.526279441628825)--(6.0, 10.392304845413264);
\draw[color=red] (11, 0)--(11.5, 0.8660254037844386);
\draw[color=red] (7, 6.928203230275509)--(7.5, 7.794228634059947);
\draw[color=red] (10, 1.7320508075688772)--(10.5, 2.598076211353316);
\draw[color=red] (6, 8.660254037844386)--(6.5, 9.526279441628825);
\draw[color=red] (10.5, 0.8660254037844386)--(11.0, 1.7320508075688772);
\draw[color=red] (6.5, 7.794228634059947)--(7.0, 8.660254037844386);
\draw[color=red] (9.5, 2.598076211353316)--(10.0, 3.4641016151377544);
\draw[color=red] (5.5, 9.526279441628825)--(6.0, 10.392304845413264);
\draw[color=black] (12, 0)--(13, 0);
\draw[color=red] (8, 6.928203230275509)--(9, 6.928203230275509);
\draw[color=red] (10, 3.4641016151377544)--(11, 3.4641016151377544);
\draw[color=black] (6, 10.392304845413264)--(7, 10.392304845413264);
\draw[color=black] (13, 0)--(12.5, 0.8660254037844386);
\draw[color=red] (9, 6.928203230275509)--(8.5, 7.794228634059947);
\draw[color=red] (11, 3.4641016151377544)--(10.5, 4.330127018922193);
\draw[color=black] (7, 10.392304845413264)--(6.5, 11.258330249197702);
\draw[color=black] (12, 0)--(12.5, 0.8660254037844386);
\draw[color=red] (8, 6.928203230275509)--(8.5, 7.794228634059947);
\draw[color=red] (10, 3.4641016151377544)--(10.5, 4.330127018922193);
\draw[color=black] (6, 10.392304845413264)--(6.5, 11.258330249197702);
\draw[color=black] (13, 0)--(14, 0);
\draw[color=red] (9, 6.928203230275509)--(10, 6.928203230275509);
\draw[color=red] (11, 3.4641016151377544)--(12, 3.4641016151377544);
\draw[color=black] (7, 10.392304845413264)--(8, 10.392304845413264);
\draw[color=black] (12.5, 0.8660254037844386)--(13.5, 0.8660254037844386);
\draw[color=red] (8.5, 7.794228634059947)--(9.5, 7.794228634059947);
\draw[color=red] (10.5, 4.330127018922193)--(11.5, 4.330127018922193);
\draw[color=black] (6.5, 11.258330249197702)--(7.5, 11.258330249197702);
\draw[color=black] (14, 0)--(13.5, 0.8660254037844386);
\draw[color=red] (10, 6.928203230275509)--(9.5, 7.794228634059947);
\draw[color=red] (12, 3.4641016151377544)--(11.5, 4.330127018922193);
\draw[color=black] (8, 10.392304845413264)--(7.5, 11.258330249197702);
\draw[color=black] (13.5, 0.8660254037844386)--(13.0, 1.7320508075688772);
\draw[color=red] (9.5, 7.794228634059947)--(9.0, 8.660254037844386);
\draw[color=red] (11.5, 4.330127018922193)--(11.0, 5.196152422706632);
\draw[color=black] (7.5, 11.258330249197702)--(7.0, 12.12435565298214);
\draw[color=black] (13, 0)--(13.5, 0.8660254037844386);
\draw[color=red] (9, 6.928203230275509)--(9.5, 7.794228634059947);
\draw[color=red] (11, 3.4641016151377544)--(11.5, 4.330127018922193);
\draw[color=black] (7, 10.392304845413264)--(7.5, 11.258330249197702);
\draw[color=black] (12.5, 0.8660254037844386)--(13.0, 1.7320508075688772);
\draw[color=red] (8.5, 7.794228634059947)--(9.0, 8.660254037844386);
\draw[color=red] (10.5, 4.330127018922193)--(11.0, 5.196152422706632);
\draw[color=black] (6.5, 11.258330249197702)--(7.0, 12.12435565298214);
\draw[color=black] (14, 0)--(15, 0);
\draw[color=red] (10, 6.928203230275509)--(11, 6.928203230275509);
\draw[color=red] (12, 3.4641016151377544)--(13, 3.4641016151377544);
\draw[color=black] (8, 10.392304845413264)--(9, 10.392304845413264);
\draw[color=black] (13, 1.7320508075688772)--(14, 1.7320508075688772);
\draw[color=red] (9, 8.660254037844386)--(10, 8.660254037844386);
\draw[color=red] (11, 5.196152422706632)--(12, 5.196152422706632);
\draw[color=black] (7, 12.12435565298214)--(8, 12.12435565298214);
\draw[color=black] (15, 0)--(14.5, 0.8660254037844386);
\draw[color=red] (11, 6.928203230275509)--(10.5, 7.794228634059947);
\draw[color=red] (13, 3.4641016151377544)--(12.5, 4.330127018922193);
\draw[color=black] (9, 10.392304845413264)--(8.5, 11.258330249197702);
\draw[color=black] (14, 1.7320508075688772)--(13.5, 2.598076211353316);
\draw[color=red] (10, 8.660254037844386)--(9.5, 9.526279441628825);
\draw[color=red] (12, 5.196152422706632)--(11.5, 6.06217782649107);
\draw[color=black] (8, 12.12435565298214)--(7.5, 12.99038105676658);
\draw[color=black] (14, 0)--(14.5, 0.8660254037844386);
\draw[color=red] (10, 6.928203230275509)--(10.5, 7.794228634059947);
\draw[color=red] (12, 3.4641016151377544)--(12.5, 4.330127018922193);
\draw[color=black] (8, 10.392304845413264)--(8.5, 11.258330249197702);
\draw[color=black] (13, 1.7320508075688772)--(13.5, 2.598076211353316);
\draw[color=red] (9, 8.660254037844386)--(9.5, 9.526279441628825);
\draw[color=red] (11, 5.196152422706632)--(11.5, 6.06217782649107);
\draw[color=black] (7, 12.12435565298214)--(7.5, 12.99038105676658);
\draw[color=black] (15, 0)--(16, 0);
\draw[color=red] (11, 6.928203230275509)--(12, 6.928203230275509);
\draw[color=red] (13, 3.4641016151377544)--(14, 3.4641016151377544);
\draw[color=black] (9, 10.392304845413264)--(10, 10.392304845413264);
\draw[color=black] (14, 1.7320508075688772)--(15, 1.7320508075688772);
\draw[color=red] (10, 8.660254037844386)--(11, 8.660254037844386);
\draw[color=red] (12, 5.196152422706632)--(13, 5.196152422706632);
\draw[color=black] (8, 12.12435565298214)--(9, 12.12435565298214);
\draw[color=black] (14.5, 0.8660254037844386)--(15.5, 0.8660254037844386);
\draw[color=red] (10.5, 7.794228634059947)--(11.5, 7.794228634059947);
\draw[color=red] (12.5, 4.330127018922193)--(13.5, 4.330127018922193);
\draw[color=black] (8.5, 11.258330249197702)--(9.5, 11.258330249197702);
\draw[color=black] (13.5, 2.598076211353316)--(14.5, 2.598076211353316);
\draw[color=red] (9.5, 9.526279441628825)--(10.5, 9.526279441628825);
\draw[color=red] (11.5, 6.06217782649107)--(12.5, 6.06217782649107);
\draw[color=black] (7.5, 12.99038105676658)--(8.5, 12.99038105676658);
\draw[color=black] (16, 0)--(15.5, 0.8660254037844386);
\draw[color=red] (12, 6.928203230275509)--(11.5, 7.794228634059947);
\draw[color=red] (14, 3.4641016151377544)--(13.5, 4.330127018922193);
\draw[color=black] (10, 10.392304845413264)--(9.5, 11.258330249197702);
\draw[color=black] (15, 1.7320508075688772)--(14.5, 2.598076211353316);
\draw[color=red] (11, 8.660254037844386)--(10.5, 9.526279441628825);
\draw[color=red] (13, 5.196152422706632)--(12.5, 6.06217782649107);
\draw[color=black] (9, 12.12435565298214)--(8.5, 12.99038105676658);
\draw[color=black] (15.5, 0.8660254037844386)--(15.0, 1.7320508075688772);
\draw[color=red] (11.5, 7.794228634059947)--(11.0, 8.660254037844386);
\draw[color=red] (13.5, 4.330127018922193)--(13.0, 5.196152422706632);
\draw[color=black] (9.5, 11.258330249197702)--(9.0, 12.12435565298214);
\draw[color=black] (14.5, 2.598076211353316)--(14.0, 3.4641016151377544);
\draw[color=red] (10.5, 9.526279441628825)--(10.0, 10.392304845413264);
\draw[color=red] (12.5, 6.06217782649107)--(12.0, 6.928203230275509);
\draw[color=black] (8.5, 12.99038105676658)--(8.0, 13.856406460551018);
\draw[color=black] (15, 0)--(15.5, 0.8660254037844386);
\draw[color=red] (11, 6.928203230275509)--(11.5, 7.794228634059947);
\draw[color=red] (13, 3.4641016151377544)--(13.5, 4.330127018922193);
\draw[color=black] (9, 10.392304845413264)--(9.5, 11.258330249197702);
\draw[color=black] (14, 1.7320508075688772)--(14.5, 2.598076211353316);
\draw[color=red] (10, 8.660254037844386)--(10.5, 9.526279441628825);
\draw[color=red] (12, 5.196152422706632)--(12.5, 6.06217782649107);
\draw[color=black] (8, 12.12435565298214)--(8.5, 12.99038105676658);
\draw[color=black] (14.5, 0.8660254037844386)--(15.0, 1.7320508075688772);
\draw[color=red] (10.5, 7.794228634059947)--(11.0, 8.660254037844386);
\draw[color=red] (12.5, 4.330127018922193)--(13.0, 5.196152422706632);
\draw[color=black] (8.5, 11.258330249197702)--(9.0, 12.12435565298214);
\draw[color=black] (13.5, 2.598076211353316)--(14.0, 3.4641016151377544);
\draw[color=red] (9.5, 9.526279441628825)--(10.0, 10.392304845413264);
\draw[color=red] (11.5, 6.06217782649107)--(12.0, 6.928203230275509);
\draw[color=black] (7.5, 12.99038105676658)--(8.0, 13.856406460551018);

\node[shape=circle,draw=black,fill=white] at (0,0){$o$};
\node[shape=circle,draw=black,fill=white] at (4,0){$x_2$};
\node[shape=circle,draw=black,fill=white] at (2,3.464101){$y_2$};
\node[shape=circle,draw=black,fill=white] at (16,0){$x_4$};
\node[shape=circle,draw=black,fill=white] at (14,3.464101){$a_2$};
\node[shape=circle,draw=black,fill=white] at (12,0){$b_2$};
\node[shape=circle,draw=black,fill=white] at (8,13.8564){$y_4$};
\node[shape=circle,draw=black,fill=white] at (6,10.3923){$c_2$};
\node[shape=circle,draw=black,fill=white] at (10,10.3923){$d_2$};
\end{tikzpicture}
\caption{An illustration of $\mathsf{SG}_{2,4}$. The picture shows the graph $\mathsf{SG}_4^+$ with its three corner vertices $o,x_4$ and $y_4$. The part colored in red shows $\mathsf{SG}_{2,4}$ with its six corner vertices $\{x_2,y_2,a_2,b_2,c_2,d_2\}$.}
\label{fig:inner_hex}
\end{figure}

\begin{lemma}\label{lem1}
For any sufficiently large $n$ and any $m<n-1$ we have
$$\mathbb{P}\Big(N_{m,n}\geq 3|V_{m,n}|+\sigma_0\sqrt{|V_{m,n}|}\Big)\geq 0.15.$$
\end{lemma}
\begin{proof}
Let $\delta\geq 0$ be such that $\mathbb{E}[\sigma(o)]=3+\delta$. Suppose first that $\delta>0$, i.e.~$\mathbb{E}[\sigma(0)]>3$. For $n$ sufficiently large we have that $\delta |V_{m,n}|\geq \sigma_0\sqrt{V_{m,n}}$. Moreover, for every $m,n\in\N$ with $m<n-1$ the total number $N_{m,n}$ of particles in $\SG_{m,n}$ is given by a sum of $|V_{m,n}|$ i.i.d.~random variables with finite variance $\sigma_0^2>0$, so  by the central limit theorem, the random variable 
$\frac{N_{m,n}-(3+\delta)|V_{m,n}|}{\sigma_0\sqrt{|V_{m,n}|}}$ converges in distribution to a standard normal distribution $\mathcal{N}(0,1)$ as $n\to\infty$. Thus, for $n$ sufficiently large, we have 
$$\mathbb{P}\Big(N_{m,n}\geq 3|V_{m,n}|+\sigma_0\sqrt{|V_{m,n}|}\Big)\geq \Big(N_{m,n}\geq (3+\delta)|V_{m,n}|\Big)\geq 0.49.$$ 
If $\delta=0$, then for sufficiently large $n$, we have
$$\mathbb{P}\Big(N_{m,n}\geq 3|V_{m,n}|+\sigma_0\sqrt{|V_{m,n}|}\Big)\geq 1-\Phi(1)-0.0001\geq 0.15,$$
where the first inequality is again due to the central limit theorem, and this completes the claim of the proof.
\end{proof}

\begin{lemma}\label{lem2}
If we denote by $u_{m,n}$ the odometer function for the stabilisation of the i.i.d.~sandpile $\sigma$ on $\SG_{m,n}$, then for all sufficiently large $n$ and $m<n-1$ we have
$$\mathbb{P}\Big(u_{m,n}(x_m)\geq \frac{\sigma_0}{6}\sqrt{|V_{m,n}|}\Big)\geq 0.025.$$
\end{lemma}
\begin{proof}
On the event that $\{N_{m,n}\geq 3|V_{m,n}|+\sigma_0\sqrt{|V_{m,n}|}\}$, which in view of Lemma \ref{lem1} occurs with probability at least $0.15$, for $n$ large enough, we know that $\sigma$ restricted to $\SG_{m,n}$ is unstable, so the excess mass of at least $\sigma_0\sqrt{|V_{m,n}|}$ has to leave $\SG_{m,n}$ through the six corner vertices $\{x_m,y_m,a_m,b_m,c_m,d_m\}$. Because of the symmetry of $\SG_{m,n}$, in at least one-sixth of the time, $x_m$ has the largest amount of mass among these six corners, which together with  Lemma \ref{lem1} proves the claim.
\end{proof}

For $m,n\in\mathbb{N}$ with $m<n$, denote by $B_{m,n}$ the following event:
$$B_{m,n}:=\Big\{u_{m,n}(x_m)\geq \frac{\sigma_0}{6}\sqrt{|V_{m,n}|} \Big\}.$$

\begin{lemma}\label{lem3}
Let $n_1,n_2,\ldots$ be any strictly increasing sequence of positive integers. Then
$$\mathbb{P}\big(B_{n_i,n_{i+1}} \text{occurs for infinitely many }i\big)=1.$$
\end{lemma}
\begin{proof}
From the construction, for $i\neq j$, the graphs $\SG_{n_i,n_{i+1}}$ and $\SG_{n_j,n_{j+1}}$ have disjoint vertices and the sandpile configuration $\sigma$ restricted on $\SG_{n_i,n_{i+1}}$ in independent of $\sigma$ restricted on $\SG_{n_j,n_{j+1}}$. Thus the events $(B_{n_i,n_{i+1}})_{i\in\mathbb{N}}$ are independent with $\mathbb{P}(B_{n_i,n_{i+1}})\geq 0.025$ for every $i\in\mathbb{N}$ in view of Lemma \ref{lem2}, which together with Borel-Cantelli proves the claim. 
\end{proof}
The previous three results lead immeadiately to the proof of Theorem \ref{thm:iid-sandpile}.

\begin{proof}[Proof of Theorem \ref{thm:iid-sandpile}]
The first step of the proof is to apply Lemma \ref{lem3} to the following particular sequence $(n_i)_{i\in\mathbb{N}}$ of positive integers defined recursively as follows. Let $n_1=1$, and for any $i\geq 2$ let $n_{i+1}$ be defined as follows:
$$n_{i+1}:=\min\Big\{k>n_i+1:\ \frac{\sigma_0}{6}\sqrt{|V_{n_i,k}|}\geq i\cdot 3^{n_i} \Big\}.$$
From the definition and the cutpoint structure of the gasket, it is clear that such a sequence is well defined. Indeed, since $|V_n|=\frac32(3+1)$ and  for any $m<n-1$ we have  $|V_{m,n}|=|V_n|-3|V_m|+6$, one can compute each term of the sequence $(n_i)_{i\in\mathbb{N}}$ explicitely.

Now suppose that the event $B_{n_i,n_{i+1}}$ occurs for a specific $i\geq 1$. Then, from the definition of the event $B_{n_i,n_{i+1}}$ and the recursive choice of $n_i$, we have
$$u_{n_i,n_{i+1}}(x_{n_i})\geq \frac{\sigma_0}{6}\sqrt{|V_{n_i,n_{i+1}}|}\geq i\cdot 3^{n_i},$$
i.e.~the vertex $x_{n_i}$ topples at least $i\cdot 3^{n_i}$ times during the stabilisation of $\sigma$ on $\SG_{n_i,n_{i+1}}$. By the Abelian property, this imples that for all $k\leq n_i$, the vertex $x_k$ has been toppled at least $i\cdot 3^k$ times.  In particular, this implies that $x_0=o$ has been toppled at least $i$ times if $B_{n_i,n_{i+1}}$ occurs. On the other hand, by Lemma \ref{lem3} we know that $B_{n_i,n_{i+1}}$ occurs infinitely often with probability $1$, thus $o$ topples infinitely often with probability $1$, i.e.~ $\mathbb{P}(u^{\infty}(0)=\infty)=1$ and this completes the proof.
\end{proof}

\begin{remark}
For an i.i.d.~sandpile $\sigma$ with $ \E[\sigma(o)]>3$ and $\mathsf{Var}[\sigma(o)]=\infty$, the fact that $\sigma$ cannot be stabilized follows immediately. If $\sigma$ is not bounded, then we can consider the bounded sandpile $\sigma\mathds{1}_{\{\sigma\leq N\}}=\left(\sigma(x)\mathds{1}_{\{\sigma(x)\leq N\}}\right)_{v\in\SG}$ for some $N\in\N$ large enough and use that $\sigma(x)\geq \sigma(x)\mathds{1}_{\{\sigma(x)\leq N\}}$ for all $x\in\SG$ almost surely. Then by Theorem \ref{thm:iid-sandpile}, $\sigma\mathds{1}_{\{\sigma\leq N\}}$ is not stabilizable almost surely, and together with Remark \ref{rem:monotonicity_stab} we get that $\sigma$ is also not stabilizable in the unbounded case.
\end{remark}
\vspace{-0.3em}
The same proof technique as in Theorem \ref{thm:iid-sandpile} can be used to prove explosion of i.i.d.~sandpiles $\sigma$ on any infinite graph $G$ for which: 
\vspace{-0.4em}
\begin{enumerate}[(a)]
\setlength\itemsep{-0.1em}
\item There exists an exhaustion $G_1\subset G_2 \subset ...$ of $G$, and for all $n\in \N$, $G_n$ is finite.
\item There exists a vertex $v\in G$, and an increasing sequence $(H_n)_{n}$ of subgraphs with $H_n\subseteq G_n$ such that the boundary $\partial H_n$ of $H_n$ is symmetric and $v\in \partial H_n$. 
\item There exists a constant $C>0$ independent of $n$, such that we have for all $n\in\N$ that $|\partial H_n|<C$.
\end{enumerate}
This properties hold for finitely ramified fractal graphs. They do not apply to $\Z^2$ when stabilizing for instance in nested boxes $[-n,n]^2$, since  the boundary of the box $[-n,n]^2$ is not symmetric and it grows linearly in $n$ while for the size of the box we have quadratic growth in $n$: $|[-n,n]^2|=(2n+1)^2$. 

\subsubsection*{Divisible sandpiles and final remarks}

We recall here another model of mass redistribution which is very similar to the Abelian sandpile model and is called {\it the divisible sandpile model}. In the divisible sandpile as introduced in \cite{divisible-sandpile-levine-peres}, we have a divisible sand configuration $\sigma:V\to\mathbb{R}$ on a graph $G=(V,E)$, which indicates the amount of mass present at each vertex. Contrary to the Abelian sandpile model, one allows fractional mass to be distributed and a vertex $x\in G$ is unstable if $\sigma(x)>1$; in this case the excess mass $\sigma(x)-1$ is distributed equally among the neighbours. There are several toppling procedures like in the case of the Abelian sandpile model; see \cite[Proposition 2.5]{div-sand-crit-dens} for details on legal toppling procedures. The initial divisible sand configuration $\sigma$ may be random or not; if $(\sigma(x))_{x\in V}$ is a sequence of i.i.d.~random variables, then we call the model {\it i.i.d.~divisible sandpile model}. Then we can define stabilization (in infinite volume) of $\sigma$ exactly the same way as in the case of Abelian sandpiles: if $u_n$ is the amount of mass emitted from a vertex during the stabilization of $\sigma$ on $G_n$, then $\sigma$ stabilizes if $u^{\infty}(x)<\infty$ for all $x$, where $u^{\infty}=\lim_{n\to\infty}u_n$. The restriction in the Abelian sandpile that the odometer function has to be integer valued introduces difficulties that are not present in the divisible sandpile model. 
In \cite{div-sand-crit-dens}, the authors investigate i.i.d.~divisible sandpiles at critical density (i.e.~when the expected number of chips per site equals one) on $\Z^d$, and more generally on infinite vertex-transitive graphs, and they show that if the initial masses have finite variances, then the i.i.d.~divisible sandpile does not stabilize almost surely. They also ask for which infinite graphs there exist i.i.d.~divisible sandpiles at critical density that do stabilize almost surely. We contribute in this direction by discarding the case of $\SG$ on which an i.i.d.~divisible sandpile at critical density does not stabilize almost surely, since the proof of Theorem \ref{thm:iid-sandpile} carries over to i.i.d.~divisible sandpiles on the  Sierpi\'nski gasket graph $\SG$.
Therefore, we have the following.

\begin{proposition}\label{prop:div}
Let $\sigma$ be an i.i.d.~divisible sandpile on the infinite Sierpi\'nski gasket graph $\SG$ with $\mathbb{E}[\sigma(0)]\geq 1 $ and $0<\mathsf{Var}[\sigma(0)]<\infty$. Then $\sigma$ does not stabilize almost surely.
\end{proposition}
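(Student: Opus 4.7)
The plan is to transcribe, almost verbatim, \textbf{Case 2} of the proof of Theorem~\ref{thm:iid-sandpile}, with the Abelian critical density $3$ replaced by the divisible critical density $1$, and the matching upper bound on the mass of a stable configuration updated accordingly. I keep the same exhaustion by the right triangular prefractals $\SG_n^{+}$ with sinks at $\{o,x_n,y_n\}$ (using $V_n$ for the vertex set of $\SG_n^{+}$, as in the proof of Theorem~\ref{thm:iid-sandpile}), exploit that $o$ is a cut point separating $\SG_n^{+}$ from $\SG_n^{-}$ so that a lower bound on the odometer restricted to $\SG_n^{+}$ transfers to the odometer on $\SG_n$ by the Abelian property of the divisible sandpile (see \cite{divisible-sandpile-levine-peres}), and combine a CLT estimate on the total mass with the three-fold symmetry of $\SG_n^{+}$.

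Setting $\sigma_0^2=\mathsf{Var}[\sigma(o)]\in(0,\infty)$ and $N_n=\sum_{x\in V_n}\sigma(x)$, the CLT gives, for some $\varepsilon>0$ and all $n$ large enough,
\begin{equation*}
\mathbb{P}\bigl(N_n>|V_n|+\sigma_0\sqrt{|V_n|}\bigr)>\varepsilon,
\end{equation*}
exactly as in Case~2, but now centered at $|V_n|$ rather than $3|V_n|$ since $\mathbb{E}[\sigma(o)]=1$. For the divisible sandpile a vertex $x$ is stable exactly when $\sigma(x)\leq 1$, so the total mass in any stable divisible configuration on $\SG_n^{+}$ (with the three corners as sinks) is at most $|V_n|$. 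On the above event, the sinks therefore absorb at least $\sigma_0\sqrt{|V_n|}$ units of mass during stabilization on $\SG_n^{+}$. Since the i.i.d.~law of $\sigma$ is invariant under the graph automorphisms of $\SG_n^{+}$ permuting $\{o,x_n,y_n\}$, the mass absorbed at the three corners has the same distribution; a union bound then yields
\begin{equation*}
\mathbb{P}\Bigl(u_n(o)\geq \tfrac{1}{3}\sigma_0\sqrt{|V_n|}\Bigr)\geq \tfrac{\varepsilon}{3}.
\end{equation*}

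As the odometer functions $(u_n)$ are monotone nondecreasing in $n$ (by the least action principle for divisible sandpiles), letting $n\to\infty$ forces $\mathbb{P}(u^{\infty}(o)=\infty)>0$, so $\mathbb{P}(\sigma\text{ stabilizes})<1$; by Kolmogorov's $0$--$1$ law applied to the tail event $\{\sigma\text{ stabilizes}\}$, which is unaffected by altering $\sigma$ on any finite set, we conclude $\mathbb{P}(\sigma\text{ stabilizes})=0$. I do not expect a substantial obstacle: the two ingredients powering Case~2, namely the Abelian property/least action principle and the three-fold symmetry of the initial data on $\SG_n^{+}$, both hold verbatim in the divisible setting; the only bookkeeping is replacing ``at most $3$ particles per site'' by ``mass at most $1$ per site'' in the stability bound and rescaling the CLT accordingly.
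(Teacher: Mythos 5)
Your proof is correct and follows essentially the same route as the paper, which itself derives Proposition~\ref{prop:div} by rerunning \textbf{Case 2} of Theorem~\ref{thm:iid-sandpile} with the stability threshold $3$ replaced by mass $1$ per site. Your two small refinements --- using distributional symmetry plus a union bound to get $\mathbb{P}(u_n(o)\geq \tfrac{1}{3}\sigma_0\sqrt{|V_n|})\geq \varepsilon/3$ rather than asserting that exactly equal mass exits each corner, and invoking a $0$--$1$ law to pass from positive probability of explosion to almost sure non-stabilization --- only make the argument more airtight without changing its substance.
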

We omit the proof since it is identical with the one of Theorem \ref{thm:iid-sandpile}, the minor difference being that one has to replace the expectation $\E[\sigma(0)]\geq 3$ with $\E[\sigma(0)]\geq 1$, therefore the random variable
$\frac{N_n-(1+\delta)|V_n|}{\sigma_0\sqrt{|V_n|}}$ converges in distribution to $\mathcal{N}(0,1)$ as $n\to\infty$ in the proof of Lemma \ref{lem1}.

\textbf{Questions on rotor walks.}
While in this work we have proven the recurrence of rotor walks with random initial configuration of rotors,
it is interesting to understand "how recurrent" these walks are, when compared to simple random walks on $\SG$. If we denote by $\mathsf{G}^{\mathsf{SRW}}(x,y)$ (respectively $\mathsf{G}^{\mathsf{URW}}(x,y)$) the expected number of visits to $y$ of a simple random walk (respectively of a uniform rotor walk) starting at $x$, what can we say about the limit
$\lim_{n\to\infty}\frac{\mathsf{G}^{\mathsf{SRW}}(x,y)}{\mathsf{G}^{\mathsf{URW}}(x,y)}?$ Does it exist?
For graphs $G$ on which the simple random walk is transient, results comparing the Green's functions $\mathsf{G}^{\mathsf{SRW}}(x,y)$ and $\mathsf{G}^{\mathsf{URW}}(x,y)$ for the simple random walk and the random rotor walk are given in \cite{swee-hong1}; see also \cite{swee-hong2}.

\textbf{Questions on Abelian and divisible sandpiles.}
The proof of stabilization of sandpiles relies mostly on the fact that for all $A>0$, $\liminf_n \mathbb{P}(|N_n-\E[N_n]|>A)>0$. If we consider the distribution of $\sigma(x)$, for $x\in\SG$, to lie in the domain of attraction of a stable law with index strictly greater than one, one can adapt the proof of Theorem \ref{thm:iid-sandpile} and of the Lemma \ref{lem1} with a stable law in place of the normal distribution, and with weaker conditions than the finiteness of the variance, but the argument might become pretty technical.
What about i.i.d.~sandpiles $\sigma$ on $\SG$ with $\E[\sigma(o)]=3$ and $\mathsf{Var}[\sigma(o)]=\infty$? Do they explode almost surely? Similarly, what can one say about i.i.d.~divisible sandpiles on $\SG$ with  $\E[\sigma(o)]=1$ and $\mathsf{Var}[\sigma(o)]=\infty$? 

\vspace{-0.2cm}

\subsection*{Acknowledgments} 
The research of both authors is supported by the Austrian Science Fund (FWF): P 34129. We are very grateful to the referee for finding a gap in the first version on the paper, and for suggesting how to fix the gap in order to prove the claim of Theorem \ref{thm:iid-sandpile}. In particular, the construction of the subgraphs $\SG_{m,n}$, of the sequence $n_i$, and the use of the Borel-Cantelli Lemma for the events $B_{n_i,n_{i+1}}$ has been suggested by the referee, whose several other suggestions improved the quality of the paper.
\bibliography{references}{}
\bibliographystyle{alpha}

\textsc{Robin Kaiser}, Institut für Mathematik, Universität Innsbruck, Austria.\\
\texttt{Robin.Kaiser@uibk.ac.at}

\textsc{Ecaterina Sava-Huss}, Institut für Mathematik, Universität Innsbruck, Austria.\\
\texttt{Ecaterina.Sava-Huss@uibk.ac.at}
\end{document}